\begin{document}
	\newcommand{\N}{\mathbb{N}}
	\newcommand{\Z}{\mathbb{Z}}
	\newcommand{\Q}{\mathbb{Q}} 
	\newcommand{\C}{\mathbb{C}}
	\newcommand{\R}{\mathbb{R}}
	\newcommand{\K}{\Bbbk}
	\newcommand{\op}[1]{\text{#1}}
	\newcommand{\tp}{\otimes}
	\newcommand{\no}{\nonumber}
	\newcommand{\KK}{\mathfrak{K}}
	\newcommand{\feyn}{\textbf{F}}
	\newcommand{\encirc}[1]{\begin{tikzpicture}[baseline=(C.base)] \node[draw,circle,inner sep=0.5pt](C) {\footnotesize #1}; \end{tikzpicture}}
	\newcommand{\encircs}[1]{\begin{tikzpicture}[baseline=(C.base)] \node[draw,circle,inner sep=0.1pt](C) {\tiny #1}; \end{tikzpicture}}
	\newcommand{\oP}{\mathcal{P}}	
	\newcommand{\cCo}{\ensuremath{\mathsf{Cor}}}
	\newcommand{\Cor}{\ensuremath{\mathsf{Cor}}}
	\newcommand{\DCor}{\ensuremath{\mathsf{DCor}}}	
	\newcommand{\PrDCor}{\ensuremath{\mathsf{Pr_{DCor}}}}
	\newcommand{\PrSigma}{\ensuremath{\mathsf{Pr_{\Sigma}}}}
	\newcommand{\set}[2]{\left\{ #1 \ | \ #2 \right\} }	
	\newcommand{\cyc}[1]{\left(\hspace{-.6ex}\left( #1 \right)\hspace{-.6ex}\right)}	
	\newcommand{\cycu}[1]{\left(\hspace{-.3ex}\left( #1 \right)\hspace{-.3ex}\right)}	
	\newcommand{\cycm}[1]{\left(\hspace{-.7ex}\left( #1 \right)\hspace{-.7ex}\right)}	
	\newcommand{\cycb}[1]{\left(\hspace{-.9ex}\left( #1 \right)\hspace{-.9ex}\right)}	
	\newcommand{\ooo}[2]{\sideset{_{#1}}{_{#2}}{\mathop{\circ}}}	
	\newcommand{\dg}[1]{{\left| #1 \right|}}	
	\newcommand{\id}{1}	
	\newcommand{\oQC}{\mathcal{QC}}	
	\newcommand{\oQO}{\mathcal{QO}}	
	\newcommand{\oQOC}{\mathcal{QOC}}	
	\newcommand{\oOC}{\mathcal{OC}}	
	\newcommand{\oAss}{\mathcal{A}\mathit{ss}}	
	\newcommand{\oCom}{\mathcal{C}\mathit{om}}	
	\newcommand{\oMod}[1]{\textbf{Mod}\left(#1\right)}	
	\newcommand{\Span}{\mathrm{Span}}	
	\newcommand{\gr}[1]{\mathsf{#1}}	
	\newcommand{\oT}{\mathcal{T}}	
	\newcommand{\oEnd}[1]{{\mathcal{E}_{#1}}}	
	\newcommand{\oooo}[4]{\prescript{#3}{#1}{\circ}^{#4}_{#2}}	
	\newcommand{\cc}{\mathbf{c}}	
	\newcommand{\cd}{\mathbf{d}}	
	\newcommand{\co}{\mathbf{o}}	
	\newcommand{\cobar}{\mathbf{C}}	
	\newcommand{\comp}{\circ}	
	\newcommand{\uo}{\underline{o}}	
	\newcommand{\uc}{\underline{c}}	
	\newcommand{\ob}{{\overline{b}}} 
	\newcommand{\ot}{\otimes} 
	\theoremstyle{plain}
	\newtheorem{theorem}{Theorem}
	\newtheorem{lemma}[theorem]{Lemma}
	\newtheorem{proposition}[theorem]{Proposition}
	\newtheorem{corollary}[theorem]{Corollary}
	\theoremstyle{definition}
	\newtheorem{definition}[theorem]{Definition}
	\theoremstyle{remark}
	\newtheorem{example}[theorem]{Example}
	\newtheorem{remark}[theorem]{Remark}
	\newtheorem{conjecture}[theorem]{Conjecture}
	

	
	\newcommand{\PICGraph}{
		\begin{tikzpicture}[scale=0.7]
		\draw (-1.2,0) .. controls +(0,0) .. (0,0) node[below]{${V_1}$} .. controls +(2,1) .. (4,0) node[below]{${V_2}$};
		\draw (0,0) .. controls +(2,-1) .. (4,0) -- (5,1) node[right]{$\scriptscriptstyle{l_3}$};
		\draw (4,0) -- (5,-1) node[right]{$\scriptscriptstyle{l_2}$};
		\draw (2,5) node[left]{$\scriptscriptstyle{l_4}$} -- (2,4) node[right]{${V_3}$} -- (0,0);
		\draw (2,4) -- (4,0);
		\node at (-0.8,0.9) [below] {$\scriptscriptstyle{h_1=l_1}$};
		\node at (1,-0.35) [below] {$\scriptscriptstyle{h_2}$};
		\node at (1,0.5) [above] {$\scriptscriptstyle{h_3}$};
		\node at (0.6,1) [above left] {$\scriptscriptstyle{h_4}$};
		\draw (2,0.625) -- (2,0.875);
		\draw (2,-0.625) -- (2,-0.875);
		\draw (0.9,2.1) -- (1.2,1.9);
		\draw (2.9,1.9) -- (3.2,2.1);
		
		\end{tikzpicture}\qquad\qquad\qquad
		\begin{tikzpicture}[scale=0.7]
		\draw [decoration={markings, mark=at position 0.2 with {\arrow[scale=1.5,>=stealth]{>}}}, postaction={decorate}] 
		[decoration={markings, mark=at position 0.55 with {\arrow[scale=1.5,>=stealth]{>}}}, postaction={decorate}] 
		[decoration={markings, mark=at position 0.95 with {\arrow[scale=1.5,>=stealth]{>}}}, postaction={decorate}] (-1.2,0) .. controls +(0,0) .. (0,0) node[below]{${G_1}$} .. controls +(2,1) .. (4,0) node[below]{${G_2}$};
		\draw [decoration={markings, mark=at position 0.33 with {\arrow[scale=1.5,>=stealth]{>}}}, postaction={decorate}] 
		[decoration={markings, mark=at position 0.7 with {\arrow[scale=1.5,>=stealth]{>}}}, postaction={decorate}]
		[decoration={markings, mark=at position 0.83 with {\arrow[scale=1.5,>=stealth]{<}}}, postaction={decorate}] (0,0) .. controls +(2,-1) .. (4,0) -- (5,1) node[right]{$\scriptscriptstyle{l_3}$};
		\draw [decoration={markings, mark=at position 0.85 with {\arrow[scale=1.5,>=stealth]{>}}}, postaction={decorate}] (4,0) -- (5,-1) node[right]{$\scriptscriptstyle{l_2}$};
		\draw [decoration={markings, mark=at position 0.08 with {\arrow[scale=1.5,>=stealth]{<}}}, postaction={decorate}] 
		[decoration={markings, mark=at position 0.55 with {\arrow[scale=1.5,>=stealth]{>}}}, postaction={decorate}]
		[decoration={markings, mark=at position 0.95 with {\arrow[scale=1.5,>=stealth]{>}}}, postaction={decorate}] (2,5) node[left]{$\scriptscriptstyle{l_4}$} -- (2,4) node[right]{${G_3}$} -- (0,0);
		\draw [decoration={markings, mark=at position 0.45 with {\arrow[scale=1.5,>=stealth]{>}}}, postaction={decorate}] 
		[decoration={markings, mark=at position 0.93 with {\arrow[scale=1.5,>=stealth]{>}}}, postaction={decorate}] (2,4) -- (4,0);
		\node at (-0.8,0.9) [below] {$\scriptscriptstyle{h_1=l_1}$};
		\node at (1,-0.35) [below] {$\scriptscriptstyle{h_2}$};
		\node at (1,0.5) [above] {$\scriptscriptstyle{h_3}$};
		\node at (0.6,1) [above left] {$\scriptscriptstyle{h_4}$};
		\draw (2,0.625) -- (2,0.875);
		\draw (2,-0.625) -- (2,-0.875);
		\draw (0.9,2.1) -- (1.2,1.9);
		\draw (2.9,1.9) -- (3.2,2.1);
		\end{tikzpicture}
	}
	
	
	\newcommand{\PICclosed}{
		\begin{tikzpicture}[scale=0.5]
		\draw (6,4.5) node{$\scriptstyle{ }$} ellipse (0.2 and 0.5); 
		\draw (0,6) .. controls +(2,-0) .. (3,5) .. controls +(1,-0.75) .. (6,4);
		\draw (0,6.5) node{$\scriptstyle{ }$} ellipse (0.2 and 0.5); 
		\node[text width=1cm] at (-0.25,6.5){$\scriptstyle{C_1}$};
		\draw (0,7) .. controls +(2,-0) .. (3,8) .. controls +(1,0.75) .. (6,9);
		\draw (6,8.5) node{$\scriptstyle{ }$} ellipse (0.2 and 0.5); 
		\node[text width=1cm] at (7.5,8.5){$\scriptstyle{D_1}$};
		\draw (6,6.5) node{$\scriptstyle{ }$} ellipse (0.2 and 0.5); 
		\draw (6,5) .. controls +(-1.5,0) and +(-1.5,0) .. (6,6);
		\draw (6,7) .. controls +(-1.5,0) and +(-1.5,0) .. (6,8);
		\draw (-0.217+3.2,6.5) arc (210:330:0.5);
		\draw (-.13+3.2,-.115+6.5) arc (150:30:0.4);
		
		\draw (9,2.5) node{$\scriptstyle{ }$} ellipse (0.2 and 0.5); 
		\node[text width=1cm] at (8.75,2.5){$\scriptstyle{C_2}$};
		\draw (9,4.5) node{$\scriptstyle{ }$} ellipse (0.2 and 0.5);
		\draw (9,2)  .. controls +(1.3,.15) .. (10.8,2.5).. controls +(0.5,0.45) .. (11.9,3) .. controls +(0.3,0) .. (12.1,3).. controls +(0.45, -0.05)  .. (13.2,2.5) .. controls +(0.9,-0.5) .. (15,2);
		\draw (15,2.5) node{$\scriptstyle{ }$} ellipse (0.2 and 0.5); 
		\draw (15,6.5) node{$\scriptstyle{ }$} ellipse (0.2 and 0.5); 
		\draw (9,7)  .. controls +(1.3,-.15) .. (10.8,6.5).. controls +(0.5,-0.45) .. (11.9,6) .. controls +(0.3,0) .. (12.1,6).. controls +(0.45, 0.05)  .. (13.2,6.5) .. controls +(0.9,0.5) .. (15,7);
		\draw (9,6.5) node{$\scriptstyle{ }$} ellipse (0.2 and 0.5); 
		\draw (9,5) .. controls +(1.5,0) and +(1.5,0) .. (9,6);
		\draw (9,3) .. controls +(1.5,0) and +(1.5,0) .. (9,4);
		\draw (15,3) .. controls +(-1.7,0) and +(-1.7,0) .. (15,6);
		\draw (8.6-0.217+2.5,4.5) arc (210:330:0.5);
		\draw (8.6-.13+2.5,-.115+4.5) arc (150:30:0.4);
		\draw (10-0.217+2.5,4.5) arc (210:330:0.5);
		\draw (10-.13+2.5,-.115+4.5) arc (150:30:0.4);
		
		\node[text width=1cm] at (7.4,6.5){$\scriptstyle{b_1}$};
		\node[text width=1cm] at (7.4,4.5){$\scriptstyle{b_2}$};
		\node[text width=1cm] at (9,6.5){$\scriptstyle{a_1}$};
		\node[text width=1cm] at (9,4.5){$\scriptstyle{a_2}$};
		
		\draw[dashed] (6.2,4)--(8.9,4);
		\draw[dashed] (6.2,5)--(8.9,5);
		\draw[dashed] (6.2,6)--(8.9,6);
		\draw[dashed] (6.2,7)--(8.9,7);
		
		\draw [decorate,decoration={brace,amplitude=6pt},xshift=15.5cm,yshift=0pt]
		(0,7) -- (0,2) node [midway,right,xshift=.1cm] {$\scriptstyle{D_2}$};    
		\end{tikzpicture}
	}
	
	
	\newcommand{\PICShuffle}{
		\begin{tikzpicture}[scale=0.344]
		
		\draw [decoration={markings, mark=at position 1 with {\arrow[scale=2,>=stealth]{>}}}, postaction={decorate}] (-4.6,5.4) arc (165:212:5);
		\node[text width=1] at (-3.2,8.4){$\tilde{\cd_j}$};
		\draw 
		[decoration={markings, mark=at position 0.11 with {\filldraw circle [radius=0.06] node(x)[below left]{${y}$};}}, postaction={decorate}][decoration={markings, mark=at position 0.25 with {\filldraw circle [radius=0.06] node(x_1){};}}, postaction={decorate}]
		[decoration={markings, mark=at position 0.39 with {\filldraw circle [radius=0.06] node(x_2){};}}, postaction={decorate}]
		[decoration={markings, mark=at position 0.75 with {\filldraw circle [radius=0.06] node(x_n1){};}}, postaction={decorate}]
		[decoration={markings, mark=at position 0.91 with {\filldraw circle [radius=0.06] node(x_n){};}}, postaction={decorate}]
		(0,4) circle [radius=4.1];
		
		\node[below] at (x_1){$\scriptstyle{y_1}$};
		\node[below right] at (x_2){$\scriptstyle{y_2}$};
		\node[above] at (x_n1){$\scriptstyle{y_{m-1}}$};
		\node[left] at (x_n){$\scriptstyle{y_m}$};
		
		\draw 
		[decoration={markings, mark=at position 0.25 with {\draw circle [radius=0.07] node(y_m1){};}}, postaction={decorate}]
		[decoration={markings, mark=at position 0.4 with {\draw circle [radius=0.07] node(y_m){};}}, postaction={decorate}]
		[decoration={markings, mark=at position 0.51 with {\draw circle [radius=0.07] node(y)[right]{${x}$};}}, postaction={decorate}]
		[decoration={markings, mark=at position 0.63 with {\draw circle [radius=0.07] node(y_1){};}}, postaction={decorate}]
		[decoration={markings, mark=at position 0.76 with {\draw circle [radius=0.07] node(y_2){};}}, postaction={decorate}]
		[decoration={markings, mark=at position 0.9 with {\draw circle [radius=0.07] node(y_3){};}}, postaction={decorate}]
		(12,4) circle [radius=4.1];
		\node[below] at (y_m1){$\scriptstyle{x_{n-1}}$};
		\node[below right] at (y_m){$\scriptstyle{x_n}$};
		\node[above right] at (y_1){$\scriptstyle{x_1}$};
		\node[above] at (y_2){$\scriptstyle{x_2}$};
		\node[above left] at (y_3){$\scriptstyle{x_3}$};
		
		\draw [decoration={markings, mark=at position 1 with {\arrow[scale=2,>=stealth]{>}}}, postaction={decorate}] (16.5,2.4) arc (-20:35:5);
		\node[text width=1] at (14.2,8.4){$\cc_i$};
		\draw[dashed] (x.north east)--(y.west); 
		\end{tikzpicture}}
	
	\newcommand{\Openfrob}{
		\begin{tikzpicture}[scale=0.48]
		\draw 
		[decoration={markings, mark=at position 0.1 with {\filldraw circle [radius=0.06] node(a10)[above right]{};}}, postaction={decorate}]
		[decoration={markings, mark=at position 0.3 with {\filldraw circle [radius=0.06] node[above]{};}}, postaction={decorate}]
		[decoration={markings, mark=at position 0.44 with {\filldraw circle [radius=0.06] node[left]{};}}, postaction={decorate}]
		[decoration={markings, mark=at position 0.6 with {\filldraw circle [radius=0.06] node(a13)[below left]{};}}, postaction={decorate}]
		[decoration={markings, mark=at position 0.78 with {\filldraw circle [radius=0.06] node[below]{};}}, postaction={decorate}]
		[decoration={markings, mark=at position 0.88 with {\filldraw circle [radius=0.06] node[below right]{};}}, postaction={decorate}]
		(7,8-1) ellipse (0.8 and 1.3);
		\draw [decoration={markings, mark=at position 1 with {\arrow[scale=1.5,>=stealth]{>}}}, postaction={decorate}] (7.5,8-1) arc (15:40:2.5);
		\draw
		[decoration={markings, mark=at position 0.2 with {\filldraw circle [radius=0.06] node[above]{};}}, postaction={decorate}]
		[decoration={markings, mark=at position 0.5 with {\filldraw circle [radius=0.06] node[left]{};}}, postaction={decorate}]
		[decoration={markings, mark=at position 0.72 with {\filldraw circle [radius=0.06] node[below]{};}}, postaction={decorate}]
		[decoration={markings, mark=at position 0.92 with {\filldraw circle [radius=0.06] node(a17)[below right]{};}}, postaction={decorate}]
		(7,3.5) ellipse (0.5 and 1); 
		\draw [decoration={markings, mark=at position 1 with {\arrow[scale=1.5,>=stealth]{>}}}, postaction={decorate}] (7.25,3.55) arc (20:36:2.7);
		\draw 
		[decoration={markings, mark=at position 0.2 with {\filldraw circle [radius=0.06] node[above right]{};}}, postaction={decorate}]
		[decoration={markings, mark=at position 0.5 with {\filldraw circle [radius=0.06] node[left]{};}}, postaction={decorate}]
		[decoration={markings, mark=at position 0.9 with {\filldraw circle [radius=0.06] node(a21)[below right]{};}}, postaction={decorate}]
		(7,0.5) ellipse (0.5 and 1); 
		\draw [decoration={markings, mark=at position 1 with {\arrow[scale=1.5,>=stealth]{>}}}, postaction={decorate}] (7.25,0.05+0.5) arc (20:36:2.7);
		
		\draw (7,1+0.5) .. controls +(-1.5,0) and +(-1.5,0) .. (7,2.5);
		\draw (7,4.5) .. controls +(-1.5,0) and +(-1.5,0) .. (7,8-1.3-1);
		
		\draw 
		[decoration={markings, mark=at position 0.1 with {\draw circle [radius=0.07] node[right]{};}}, postaction={decorate}]
		[decoration={markings, mark=at position 0.25 with {\draw circle [radius=0.07] node(a1)[above]{};}}, postaction={decorate}]
		[decoration={markings, mark=at position 0.4 with {\draw circle [radius=0.07] node[left]{};}}, postaction={decorate}]
		[decoration={markings, mark=at position 0.55 with {\draw circle [radius=0.07] node[left]{};}}, postaction={decorate}]
		[decoration={markings, mark=at position 0.7 with {\draw circle [radius=0.07] node(a4)[left]{};}}, postaction={decorate}]
		[decoration={markings, mark=at position 0.87 with {\draw circle [radius=0.07] node(a5)[right]{};}}, postaction={decorate}]
		(11-14,7-0.5) ellipse (0.8 and 1.3);
		\draw [decoration={markings, mark=at position 1 with {\arrow[scale=1.5,>=stealth]{>}}}, postaction={decorate}] (11.5-14,7-0.5) arc (15:40:2.5);
		\draw 
		[decoration={markings, mark=at position 0.5 with {\draw circle [radius=0.07] node[left]{};}}, postaction={decorate}]
		[decoration={markings, mark=at position 0.90 with {\draw circle [radius=0.07] node[right]{};}}, postaction={decorate}]
		(11-14,3-0.5) ellipse (0.5 and 1); 
		\draw [decoration={markings, mark=at position 1 with {\arrow[scale=1.5,>=stealth]{>}}}, postaction={decorate}] (11.25-14,3.05-0.5) arc (20:36:2.7);
		\draw (11-14,4-0.5) .. controls +(1.3,0) and +(1.3,0) .. (11-14,7-1.3-0.5);
		
		\draw (13.-0.417-13,3.5-0.1+1.5) arc (210:330:0.82);
		\draw (13.-.301-13+0.03,-.115+3.5-0.19+1.5) arc (150:30:0.6);
		\draw (4.1-0.417-1,4.5-0.1+2) arc (210:330:0.82);
		\draw (4.1-.301-1+0.03,-.115+4.5-0.19+2) arc (150:30:0.6);
		\draw (4.1-0.417-0.5,2.5-0.1+1) arc (210:330:0.82);
		\draw (4.1-.301-0.45,-.115+2.5-0.19+1) arc (150:30:0.6);
		
		\draw (11-14,2-0.5) .. controls +(5.1,0) and +(0.1,-0.4) .. (7,-1+0.5);
		\draw (11-14,7+1.3-0.5) .. controls +(5.1,-0.7) and +(0.1,0.2) .. (7,8+1.3-1);
		
		\node[text width=1] at (8+0.2,7.3){$\cd_1$};
		\node[text width=1] at (7.8+0.1,3.8){$\cd_2$};
		\node[text width=1] at (7.8+0.1,0.4){$\cd_3$};
		\node[text width=1] at (-4.9,7.1){$\cc_1$};
		\node[text width=1] at (-4.5,3.4){$\cc_2$};
		
		\end{tikzpicture}}


	\title{Properads and Homotopy Algebras Related to Surfaces \vspace{1.0cm}}
	\author{Martin Doubek} \author{Branislav Jur\v co}\email{branislav.jurco@gmail.com} \affiliation{Charles University, Faculty of Mathematics and Physics, Sokolovsk\'a 83, 186 75 Prague, Czech Republic}\author{Lada Peksov\'a}\email{lada.peksova@gmail.com} 
	
	\affiliation{Charles University, Faculty of Mathematics and Physics, Sokolovsk\'a 83, 186 75 Prague, Czech Republic\\and\\
		Georg-August University, Faculty of Mathematics, Bunsenstra\ss e 3-5, 370 73 G\"ottingen, Germany}

	\begin{abstract}
		\vspace{1.7cm}
		\centerline{\bf Abstract \vspace{0.4cm}}
		Starting from a biased definition of a properad, we describe explicitly algebras over the cobar construction of a properad.  Equivalent description in terms of solutions of generalized master equations, which can be interpreted as homological differential operators, is explained from the properadic point of view. This is parallel to Barannikov's theory for modular operads. In addition to the well known IBL-homotopy algebras, the examples include their associative analogues, which we call $IBA$-homotopy algebras, and a combination of the above two.
		
	\end{abstract}
	
	\maketitle
	\thispagestyle{empty}
	\centerline{\it{Dedicated to the memory of Martin Doubek.}}
	\newpage
	\tableofcontents
	\section{Introduction} 
	
	Operads are objects that model operations with several inputs and one output. As such, they can be generalized in the context of graphs in two possible ways. The undirected graphs with several inputs lead to the notion of cyclic and modular operads, whereas the connected directed graphs with several inputs and several outputs lead to the notion of properads. Both include examples, which can be interpreted  in terms of 2-dimensional surfaces, with boundaries and punctures\footnote{Punctures can be in the interior and/or on the boundaries, i.e., describing both open and/or closed strings}. For modular operads, a detailed discussion can be found in \cite{DJM} by M\"unster and the two first authors. The aim of this article is to continue this work but this time for properads.
	
	The properads  were first introduced in \cite{Bruno} by Vallette as connected parts of PROPs. In \cite{Bruno} he gives both an unbiased as well as a biased definition.  Here we use a biased definition, which is at closest to the one in \cite{Hackney}. In our definition, a properad is indexed by two finite sets. Our main examples are the closed (commutative) Frobenius properad, the open (associative) Frobenius properad, open-closed Frobenius properad and their cobar complexes, which we use to study the corresponding homotopy algebras.
	
	{Barannikov \cite{BarannikovModopBV} showed how an algebra over the cobar construction over a modular operad can equivalently be described as a solution of a master equation for certain generalized BV algebra\footnote{The cobar construction in the category of modular operads is also known as the Feynman transform. The Feynman transform produces, out of a modular operad, a twisted modular operad. Although this name might seem to be an unfortunate choice, it is widely used in the literature and we will use it occasionally.}. Here we give an analogous description for properads.
		In this paper, we consider the construction of the cobar complex in the same manner as \cite{MarklShneiderStasheff}. In short, the cobar complex of a properad is a free properad over its suspended linear dual equipped with the differential induced by the duals of the structure operations.}
	
	The paper is organized as follows. Sect. 2 contains conventions and notations used through the paper. In Sect. 3, we introduce properads along with our main examples, the closed (commutative) Frobenius properad, the open (associative) Frobenius operad, the open-closed Frobenius properad and the endomorphism properad.  Further, we recall the cobar construction over a properad. Finally, we give an analog of Barannikov's theory for algebras over the cobar complex.
	In Sect. 4, we give explicit descriptions of algebras over the cobar complexes of (open, closed and open-closed) Frobenius properads. In the closed case, we recover the  well-known result that the corresponding algebras are $IBL_\infty$-algebras, cf. \cite{Drummond}, \cite{Cieliebak ibl}\footnote{For a recent, intriguing description of $IBL_\infty$-algebras, see \cite{Markl}}. The explicit descriptions include one in terms of operations with $m$ inputs and $n$ outputs labeled by a genus $g$ and one in terms of a ``homological  differential operator" on formal power series in $\tau$, the formal variable of degree 0, with values in the symmetric algebra over the underlying dg graded vector space.  Finally, we give the corresponding description in the closed and open-closed case.
	
	In this paper, we do not discuss morphisms and minimal models for our homotopy algebras\footnote{For $IBL_\infty$-algebras this is discussed in detail in \cite{Cieliebak ibl}}. This will be done elsewhere. 
	
	We finish this Introduction with the following remark. As noted in \cite{Cieliebak ibl}, the algebraic framework of $IBL_\infty$-algebras is used for three
	related but different purposes: (equivariant) string topology, symplectic field theory and Lagrangian Floer theory of higher genus. Also, the quantum open-closed string field theory can be formulated in the language of (some particular) $IBL_\infty$-algebras and their morphisms \cite{qocha}, \cite{Ivo-Korbi}.
	We hope that also the other homotopy algebras discussed in this paper might possibly find similar applications.
	
	The last two authors are responsible for all possible mistakes and errors.

	\section{Conventions and notation}
	
	\begin{enumerate}
		\item $\N$ is the set of positive integers, $\N_0:=\N\cup\{0\}$.
		\item $\K$ is a field of characteristic $0$. The multiplication in $\K$ will be either denoted . or omitted. All (dg) vector spaces are considered over $\K$.
		\item Dg vectors spaces have differential of degree $+1$.
		Morphisms of dg vector spaces are linear maps commuting with differentials.
		\item $\sqcup$ is \emph{disjoint} union.
		Whenever $A\sqcup B$ appears, $A,B$ are automatically assumed disjoint.
		\item $\xrightarrow{\sim}$ or $\xrightarrow{\cong}$ denotes an iso (in particular a bijection).\item $\uparrow$ is suspension.
		\item $A^{\#}$ is the linear dual of $A$.
		\item $\Sigma_n$ is the symmetric group on $n$ elements.
		\item $[n]$ is the set $\lbrace 1, 2, \ldots n \rbrace$
	\end{enumerate}

	\section{Properads, Cobar Construction and Master Equation}\label{sec:operads}
	
	\subsection{Properads}
	Denote by $\Cor$ the category of finite sets and their
	isomorphisms (corollas). 
	
	\begin{definition} \label{DEFCorr}
		Denote by $\DCor:= \Cor \times \Cor$ the category of directed  corollas: the objects are pairs $(C,D)$ with $C$ and $D$ finite sets which are called the  outputs and inputs. 
		A morphism $(\rho, \sigma):(C,D)\to (C',D')$ is a pair of bijections $\rho: C\xrightarrow{\sim}C'$, $\sigma: D\xrightarrow{\sim}D'$.
	\end{definition}
	
	\begin{definition} \label{DEFModOp} 
		A properad  $\oP$ consists of a collection
		$$\set{\oP(C,D)}{(C,D)\in\DCor}$$ of dg vector spaces and two collections of degree $0$ morphisms of dg vector spaces
		\begin{gather*}
		\set{\oP(\rho,\sigma):\oP(C,D)\to\oP(C',D')}{(\rho, \sigma):(C,D)\to(C',D')}\\
		\set{\stackrel{\eta}{\ooo{B}{A}}:\oP(C_1,D_1\sqcup B)\tp\oP(C_2\sqcup A, D_2)\!\to\!\oP(C_1\sqcup C_2,D_1\sqcup D_2)}
		{\eta: B\xrightarrow{\sim}A }
		\end{gather*}	
		where $A,B$ are arbitrary isomorphic finite nonempty sets. 
		These data are required to satisfy the following axioms:
		\begin{enumerate}
			\item $\oP((\id_C,\id_D ))=\id_{\oP(C,D)}, \quad \oP((\rho\rho',\sigma'\sigma))=\oP((\rho,\sigma)) \ \oP((\rho',\sigma'))$
			\item $(\oP((\rho_1\sqcup\rho_2|_{C_2},\sigma_1|_{D_1}\sqcup\sigma_2)) \ \stackrel{\eta}{\ooo{B}{A}} = \stackrel{\rho_2\eta\sigma_1^{-1}}{\ooo{\sigma_1(B)}{\rho_2(A)}} \ (\oP((\rho_1,\sigma_1))\tp\oP((\rho_2,\sigma_2))$
			
			\item $\stackrel{\epsilon}{\ooo{B_2\sqcup B_3}{A_2\sqcup A_3}}\ (\stackrel{\tilde{\eta}}{\ooo{B_1}{A_1}}\tp\id) = \stackrel{\eta}{\ooo{B_1\sqcup B_3}{A_1\sqcup A_3}} \ (\id\tp\stackrel{\tilde{\epsilon}}{\ooo{B_2}{A_2}})$ 
			
			where $\tilde{\eta}, \tilde{\epsilon}$ are restrictions of $\eta, \epsilon$ to the pairs of nonempty sets $A_1, B_1$ and $A_2, B_2$, respectively. 
			
			For $A_1, B_1$ empty sets, 
			
			$\stackrel{\tilde{\epsilon}}{\ooo{B_2}{A_2}}\ (\stackrel{\eta}{\ooo{B_3}{A_3}}\tp\id) = \stackrel{\eta}{\ooo{B_3}{A_3}} \ (\id\tp\stackrel{\tilde{\epsilon}}{\ooo{B_2}{A_2}})$. 
			
			For $A_2, B_2$ empty sets,  
			
			$\stackrel{{\epsilon}}{\ooo{B_3}{A_3}}\ (\stackrel{\tilde{\eta}}{\ooo{B_1}{A_1}}\tp\id) = \stackrel{\tilde{\eta}}{\ooo{B_1}{A_1}} \ (\id\tp\stackrel{{\epsilon}}{\ooo{B_3}{A_3}})$. 
		\end{enumerate}
		whenever the expressions make sense.

		By $\PrDCor$ we will denote the category of properads with the obvious morphisms. 
	\end{definition}
	
	\begin{remark} \label{REMSModCyc}
		If we consider only Axiom $1.$, the resulting structure is called a $\Sigma$-bimodule.
		Obviously, by forgetting the composition map, a properad gives rise to its underlying $\Sigma$-module.
		
		All these notions are equivalent to their usual counterparts in \cite{Bruno}.
		For example, Axiom $1.$ stands for the left and right $\Sigma$-actions on $C,D$ respectively, $2.$ expresses the equivariance and $3.$ expresses the associativity of the structure maps.
	\end{remark}
	
	In this paper, we consider only properads such that the dg vector spaces $\oP(C,D)$ have an additional $\mathbb{N}_0$ grading by a degree which  will be denoted by $G$. The differential and both $\Sigma$-actions are assumed to preserve the degree $G$-components $\oP(C,D,G)$. For operations $\stackrel{\eta}{\ooo{B}{A}}$, we assume that they map the components with respective degrees $G_1$ and $G_2$ into the component of the degree $G(G_1,G_1,A, B, \eta)$ which is determined, in general, by the degrees $G_1$, $G_2$ sets $A$, $B$ and their identification $\eta$. Also, let us introduce $\chi := 2G + |C| + |D| -2$. Correspondingly, we will use the notation $\oP(C,D,\chi)$ for $\oP(C,D,G)$ with $2G=\chi - |C| - |D| +2\geq 0$. Having in mind the forthcoming examples, we refer to $\chi$ as the ``Euler characteristic''.
	
	We will assume the stability condition $\chi >0$, unless explicitly mentioned otherwise. In particular, this means that for $G=0$, $|C| +|D|\geq 3$ and for $G=1$, $|C| +|D|\geq 1$. For $G>1$, there is no restriction on the number of inputs and outputs. 
	
	Here we should mention that we use slightly different conventions as in \cite{Bruno}, where it is assumed that the sets $C$ and $D$ are always non-empty, i.e., there is always at least one input and one output. Also, in \cite{Bruno}, one input and one output are allowed for $G=0$. We will comment on this further when describing the cobar complex and algebras over it.
	
	It will prove useful to consider the skeletal version of properads. 
	\begin{definition}
		$\bf{\Sigma}$ is the skeleton of category $\DCor$ consisting of corollas of the form $([m],[n])$, $m,n\in \mathbb{N}_0$. ${\Sigma}$-bimodule is a functor from $\bf{\Sigma}$ to dg vector spaces.
	\end{definition}
	
	Before giving the next definition, let us introduce the following convenient notation. For $n\in \mathbb{N}_0$ and a set $\{a_1,a_2,\ldots\}$ of natural numbers, define  
	$$ n + \{a_1,a_2,\ldots\} := \{n+a_1,n+a_2,\ldots\}.$$
	Given $N\subset [n_1+|N|]$, and $M \subset [m_2+|M|]$ define bijections
	$$\rho_N : [n_1+|N|]- N \to n_2+[n_1],$$
	$$\rho_M: [m_2+|M|]- M \to m_1 + [m_2]$$ 
	by requiring them to be increasing.\footnote{The meaning of $n_2$ and $m_1$ will become clear from the next definition.} 
	
	\begin{definition} 
		Given a properad $\oP$ with structure morphisms $\stackrel{\eta}{\ooo{B}{A}}$, define $\bar\oP$ to consist of a collection
		$$\set{\bar\oP(m,n)}{([m],[n])\in\DCor}$$ of dg $\Sigma_m\times \Sigma_n$-bimodules  and a collection 
		\begin{gather*}
		\set{\stackrel{\xi}{\bar{\ooo{N}{M}}}:\bar\oP(m_1,n_1 + |N| )\tp\bar\oP(m_2  + |M|,n_2 ) \!\to\!\bar\oP(m_1+m_2, n_1+n_2)}{
			\xi: N\xrightarrow{\sim}M} 
		\end{gather*} 
		of a degree $0$ morphisms of dg vector spaces determined by formulas
		$$\bar\oP(m,n):=\oP([m],[n])$$
		$$\stackrel{\xi}{\bar{\ooo{N}{M}}}:= \oP(\kappa_1^{-1}\sqcup \rho_M\kappa_2^{-1}|_{C_2},\rho_N\lambda_1^{-1}|_{D_1}\sqcup \lambda_2^{-1} )\stackrel{\eta}{\ooo{B}{A}}(\oP(\kappa_1,\lambda_1)\tp\oP(\kappa_2,\lambda_2)),$$
		where $\kappa_1:[m_1]\to C_1$, $\lambda_1:[n_1 +|B|]\to D_1 \sqcup B$, $\kappa_2:[m_2 +|A|]\to C_2 \sqcup A$ and $\lambda_2:[n_2]\to D_2$ are arbitrary bijections such that $C_1\cap C_2 =D_1\cap D_2=\emptyset$ and $\xi=\kappa_2^{-1}\eta\lambda_1$. Also, $M=\kappa_2^{-1}A$  and $N=\lambda_1^{-1}B$.
	\end{definition}
	
	\begin{definition}  A shuffle $\sigma$ of type $(p, q)$ is an element of $\Sigma_{p+q}$ such that $\sigma(1)\! <\! \sigma(2)\! <\! \ldots\!\!<\! \sigma(p)$ and $\sigma(p + 1) < \ldots < \sigma(p+q)$. 
		Similarly an unshuffle $\rho$ of type $(p, q)$ is an element of $\Sigma_{p+q}$ such that we have $\rho(i_j) = j$ for some $i_1 < i_2 < \ldots < i_p$ , $i_{l+1} < \ldots < i_k$. Hence, $\rho$ is an unshuffle if $\rho^{-1}$ is a shuffle.
	\end{definition}
	
	\begin{remark}\label{poznamka s unshuffly} Obviously, the definition of $\stackrel{\xi}{\bar{\ooo{N}{M}}}$ doesn't depend on bijections $\kappa_1,\lambda_1, \kappa_2, \lambda_2$. Hence, sometimes, it might be useful, to make some simplifying choices of these. If, e.g., $C_1\cup C_2= [m], D_1\cup D_2= [n]$, $\kappa_1, \lambda_2$ as well as  $\lambda_1|_{[n_1 +|B|]-B}$ and $\kappa_2|_{[m_2 +|A|]-A}$ are increasing,  then $(\kappa_1^{-1}\sqcup \rho_M\kappa_2^{-1}|_{C_2})$ and $(\rho_N\lambda_1^{-1}|_{D_1}\sqcup \lambda_2^{-1})$ are $(m_1,m_2)$ and $(n_2,n_1)$-unshuffles, respectively.
	\end{remark}
	
	The operations $\stackrel{\xi}{\bar{\ooo{N}{M}}}$ satisfy properties analogous to the axioms of Definition \ref{DEFModOp}.
	Hence, we can introduce a new category $\PrSigma$ of $\Sigma$-bimodules with operations $\stackrel{\xi}{\bar{\ooo{N}{M}}}$. Obviously, categories $\PrDCor$ and $\PrSigma$ are equivalent. Although the axioms for operations  $\stackrel{\xi}{\bar{\ooo{N}{M}}}$ in  $\PrSigma$ is a way too complicated for practical purposes. Nevertheless, as we will see, the description of endomorphism properads $\oEnd{V}$ in the category $\PrSigma$ is nice and simple.
	
	Next, we give some examples:
	
	\begin{example}{The (closed) Frobenius properad $\mathcal{F}.$}\label{closed frob properad}
		For each $(C,D)\in \DCor$ and  $\chi>0$,\footnote{Recall, $\chi = 2G + |C| + |D| -2$.} put $\mathcal{F}(C,D,\chi) = {\Bbbk}$, i.e., the linear span on one generator $p_{C,D,\chi}$ in degree zero. The differential is trivial, as well as the $\Sigma$-bimodule structure. The operations $\stackrel{\eta}{\ooo{B}{A}}$ do not depend on sets $A,B$ and $\eta$,
		$$\stackrel{\eta}{\ooo{B}{A}}: p_{C_1,D_1\sqcup B,\chi_1}\otimes p_{C_2\sqcup A,D_2,\chi_2}\mapsto p_{C_1\sqcup C_2, D_1\sqcup D_2,\chi_1 +\chi_2}.$$ 
	\end{example}
	
	Geometrically, this properad consists of homeomorphism classes of 2-dimensional compact oriented surfaces with two kinds of labeled boundary components, the inputs and outputs. Here, $G=g$, is the geometric genus of the surface. Under the operation $\stackrel{\eta}{\ooo{B}{A}}$, we have $g = g_1 +g_2 + |A| -1$. Hence, the Euler characteristic $\chi$ is indeed additive as indicated in the formula above. Bijections act by relabeling  the inputs and outputs independently. The operation $\stackrel{\eta}{\ooo{B}{A}}$ for a non-trivial pair of sets $(A,B)$ consists of gluing surfaces along the inputs in $B$ and outputs in $A$ identified according to $\eta$. 
	
	\begin{figure}[h]
		\begin{center}
			\PICclosed
			\caption{ $\stackrel{\eta}{\ooo{B}{A}}$, where $A=\{a_1, a_2\}$, $B=\{b_1, b_2\}$ and $\eta(b_1)=a_1, \eta(b_2)=a_2$
			}
		\end{center}
	\end{figure}
	
	\begin{definition}
		A cycle in a set $C$ is an equivalence class $\cyc{x_1,\ldots,x_n}$ of an $n$-tuple $(x_1,\ldots,x_n)$ of several distinct elements of $C$ under the equivalence $(x_1,\ldots,x_n) \sim \tau(x_1,\ldots,x_n)$, where $\tau\in\Sigma_n$ is the cyclic permutation $\tau(i)=i+1$ for $1\leq i\leq n-1$ and $\tau(n)=1$.
		In other words,
		$$\cyc{x_1,\ldots,x_n} = \cdots = \cyc{x_{n-i+1},\ldots,x_n,x_1,\ldots,x_{n-i}} = \cdots = \cyc{x_2,\ldots,x_n,x_1}.$$
		We call $n$ the length of the cycle.
		We also admit the empty cycle $\cyc{}$, which is a cycle in any set.
	\end{definition}
	
	For a bijection $\rho:C\xrightarrow{\sim}D$ and a cycle $\cyc{x_1,\ldots,x_n}$ in $C$, define a cycle in $D$:
	$$\rho\cyc{x_1,\ldots,x_n} := \cyc{\rho(x_1),\ldots,\rho(x_n)}.$$
	
	\begin{example}{The open Frobenius properad $\mathcal{OF}$.}
		\begin{gather*}
		\mathcal{OF}(C,D,\chi) := \Span_{\K}\left\{ \{\cc_1, \cc_2, \ldots \cc_{p},\cd_1, \cd_2 \ldots,\cd_{q}\}^g \ |\ b_1, b_2\in\N,\ g\in\N_0\right\},
		\end{gather*}
		where $\cc_i,\cd_j$ are cycles in $C$ and $D$, respectively,  $\bigsqcup_{i=1}^{p}\cc_i=C,\bigsqcup_{j=1}^{q}\cd_j=D$, and for stable $\chi=2G+|C| +|D|-2>0$, with $G=2g +b$, $b=p+q$. Also,  $\{\cc_1, \cc_2, \ldots \cc_{p},\cd_1, \cd_2 \ldots,\cd_{q}\}^g$  is a symbol of degree $0$, formally being a pair consisting of $g\in\N_0$ and a set of cycles in $(C,D)$ with the above properties.
		We assume also $\cc_i\cap\cd_j=\emptyset$, $\cc_i\cap\cc_j=\emptyset$ and $\cd_i\cap\cd_j=\emptyset$ for all $i,j$. 
		
		For a pair of bijections $(\rho,\sigma):(C,D)\xrightarrow{\sim}(C',D')$, let 
		$$\mathcal{OF}(\rho,\sigma))(\{\cc_1, \cc_2, \ldots \cc_{p},\cd_1, \cd_2 \ldots,\cd_{q}\}^g) := \{(\rho(\cc_1), \ldots \rho(\cc_{p}),\sigma^{-1}(\cd_1),\ldots \sigma^{-1}(\cd_{q})\}^g.$$

		The formal definition of the operations $\stackrel{\eta}{\ooo{B}{A}}$ is a bit clumsy, so we refrain from it. Instead, note the following geometric interpretation: $\mathcal{OF}(C,D,\chi)$ is spanned by homeomorphism classes of 2-dimensional compact oriented stable surfaces with genus $g$, $p$ output boundaries and $q$ input boundaries.  The input boundaries can be permuted freely among themselves, as well as the output boundaries. We put $\chi=2(2g +b-1)+|C| +|D|-2$, i.e, $G=2g+b-1$, with $b=p +q$. 
		\begin{figure}[h]
			\begin{center}
				\Openfrob
				
			\end{center}
		\end{figure}
		
		The result of $\stackrel{\eta}{\ooo{B}{A}}$ is  obtained by (orientation preserving) gluing of two surfaces along the inputs in $B$ and outputs in $A$ identified according to $\eta$. Such a gluing creates a new surface which might contain mixed cycles, i.e., cycles containing both inputs and outputs. Such mixed cycles are subsequently split, within the resulting surface (and in an orientation preserving way), into pairs of cycles containing either inputs or outputs only. In the following, we will occasionally refer to the elements of boundary cycles as to segments.
		\\
		
		We hope that the following examples of such gluings and splittings will clarify geometric interpretation given above.
		We start with the simplest example of gluing two surfaces along one output and one input. 
		
		More specifically, we want to glue together an output segment $x$ of cycle $\cc_i = \cyc{x,x_1, x_2, \ldots x_n}$ of boundary $b_i$ together with an input segment $y$ of cycle $\tilde{\cd_j}=\cyc{y_1, y_2,\ldots y_m,y}$ of boundary $\tilde{b_j}$.\footnote{Note, that using the cyclic symmetry of the cycles, we can always move the segments $x$ and $y$ to these positions within the respective cycles.} According to the above description, there are two steps. In the first one a new mixed cycle $\cyc{y_1, y_2,\ldots y_m, x_1, x_2, \ldots x_n}$ is created. Hence, this new cycle is obtained by identifying of $x$ with $y$, removing the resulting point an joining the remaining parts of the original cycles, so that the resulting orientation is still compatible with the induced orientation of the boundaries. 

		\begin{figure}[h]
			\begin{center}
				\PICShuffle
				\caption{Connecting a segment $x$ from boundary $b_i$ with a segment $y$ from boundary $\tilde{b_j}$. The output segments are depicted as black circles and the input segments as white circles.}
				
			\end{center}
		\end{figure}
		
		However, we want to get again boundaries with inputs or outputs only. This leads to the second step, where we split the new cycle into two cycles of just outputs $\cyc{x_1, x_2, \ldots x_n}$ and inputs $\cyc{y_1, y_2,\ldots y_m}$.\footnote{This step may look bit trivial in this case but it gives a nontrivial result in the general case.} 
		\\

		Let us now turn our attention to the general case when on each of the two surfaces there are several segments on several boundaries which have to be glued together.
		Obviously, the formal description, as in the previous case, would be this time too complicated. In sake of simplicity, let us instead describe in words a simple algorithm how to glue the segments in order to obtain the mixed cycles (composed of both inputs and outputs), i.e., the ``new cycles" from step 1 above, and give one simple example to illustrate it.
		
		Obviously, we can consider only boundaries on which the segments that we are gluing together are positioned and ignore the rest. Let us choose one arbitrary segment of one of these boundaries which has to be glued.\footnote{It can be either an input or output segment.} Following the orientation of its cycle, we write down the segments of this cycle until we meet another segment which has to be glued to an another segment of a boundary on the other surface. We do not write down this segment nor its ``glued partner'', but instead we move to this partner along the gluing and continue in recording the segments according to the orientation of the partner's cycle. We continue this procedure until we get back to the point where we started. The recorded sequence gives a mixed cycle, the ``new cycle". To find all these mixed cycles, we choose another segment which wasn't written yet and start the procedure again.\footnote{Now already within the newly created surface.} 
		
		This gives us cycles with mixed outputs and inputs, but all of them could be split again into cycles of inputs and of outputs only by omitting the segments of the other type. We should be cautious with the following. If in course of this procedure an empty cycle arises, we have to split it too into an ``output'' and an ``input'' cycle. 
		\\
		
		An illustrative example could be gluing together segments $x_2$ with $y_6$, $x_3$ with $z_1$ and $x_7$ with $y_4$ of cycle $\cyc{x_1, x_2, \ldots x_8}$  of output segments of one surface and of cycles $\cyc{y_1, y_2,\ldots y_6}, \cyc{z_1, z_2, \ldots z_4}$
		of input segments of an another one.
		
		Let us choose one arbitrary segment, for example $y_1$. Following the orientation we write in a sequence $y_1,y_2,y_3$. The following segment $y_4$ is glued so we do not write it nor its glued partner $x_7$ but we continue from the position of $x_7$ according to orientation, i.e., with $x_8, x_1$. Then again, $x_2$ is glued with $y_6$ so we move to position of $y_6$ without recording this glued couple and continue according orientation. By this we get again into the position of $y_1$ where we started. One of the mixed cycles is therefore $\cyc{y_1,y_2,y_3,x_8,x_1}$. 
		
		To obtain another mixed cycle we choose for example $x_4$ and by following the orientation we get a beginning of the sequence $x_4, x_5,x_6$ which eventually gives us a mixed cycle $\cyc{x_4,x_5,x_6,y_5,z_2,z_3,z_4}$.
		
		These two mixed cycles are later split into cycles $\cyc{x_8,x_1}$, $\cyc{x_4,x_5,x_6}$ of input segments and into cycles $\cyc{y_1,y_2}$, $\cyc{y_5,z_2,z_3,z_4}$ of output segments.
		
		
	\end{example}
	\begin{remark}
		One can check that 
		
		i) The above algorithm is independent on the choices made.
		
		ii) 
		But now, the Euler characteristic, in contrary to the closed Frobenius properad, is not additive anymore. Concerning the genus of the resulting surface, it is given by a sum of genera of the original surfaces and the number of distinct pairs of boundaries which were ``glued together''. For instance, in the last illustrative example, there are only two distinct pairs of boundaries which were glued together although we glued together three pairs of segments.
	\end{remark}

	Finally, we can combine the above two properads in a rather simple way to obtain a $2$-colored properad $\mathcal{OCF}$, which we call open-closed Frobenius properad. 
	
	\begin{definition} Let $\DCor_2$ be the category of 2-colored
		directed corollas. The objects are pairs $((O_1,O_2,)(C_1,C_2),G)$, where $(O_1,O_2)$ and $(C_1,C_2)$ are pairs of finite sets and $G$ is a non-negative half-integer, i.e. of the form $G= \frac{N}{2}$ for a non-negative integer $N$. Elements of $O$ are called open, elements of $C$ are called closed.
		
		A morphism $((O_1,O_2),(C_1,C_2),G)\to ((O'_1,O'_2),(C'_1,C'_2),G')$ is defined only for $G=G'$ and it is a a quadruple of bijections $O_1\stackrel{\sim}{\to}O'_1$, $O_2\stackrel{\sim}{\to}O'_2$, $C_1\stackrel{\sim}{\to}C'_1$ and $C_2\stackrel{\sim}{\to}C'_2$. 
	\end{definition}
	
	To define a $2$-colored properad, we replace in Definition \ref{DEFModOp} the category  $\DCor$ by $\DCor_2$, the characteristic $\chi$ is now $\chi = 2G + |O_1| + |O_2| + |C_1| + |C_2|-2 $, and also we consider only operations of the form
	\begin{align*}
	&\stackrel{(\eta_o,\eta_c)}{\ooo{(B_o,B_c)}{(A_o, A_c)}}:
	((O_1,O_2\sqcup B_o),(C_1,C_2\sqcup B_c),\chi_1)\tp ((O'_1\sqcup A_o,O'_2),(C'_1\sqcup A_c,C'_2),\chi_2)\\
	&\to ((O_1\sqcup O'_1,O_2\sqcup O'_2),(C_1\sqcup C'_1,C_2\sqcup C'_2),\chi),
	\end{align*} 
	for bijections $\eta_o:B_o\stackrel{\sim}{\to} A_o$ and $\eta_c:B_c\stackrel{\sim}{\to} A_c$.\footnote{Subscripts $o$ and $c$ again correspond to open and closed, respectively.}  The modification of axioms is obvious, we leave it to the reader to fill in the details.
	
	\begin{example}{The open-closed Frobenius properad $\mathcal{OCF}$}. For the $2$-colored properad $\mathcal{OCF}$, the degree $0$ vector space $\mathcal{OCF}((O_1,O_2),(C_1,C_2),G)$ is generated by homeomorphism classes of  2-dimensional compact oriented stable surfaces with genus $g$, $|O_1|$ open outputs and $|O_2|$ open inputs distributed over $b_1$ and $b_2$ open boundaries respectively and  $|C_1|$ closed outputs and $|C_2|$ closed inputs in the interior, $G = 2g +b +(|C_1| +|C_2|)/2-1$ with $b=b_1+b_2$. The $\Sigma$-action preserves the colors and the operations are defined by gluing open/closed inputs into open/closed outputs.
	\end{example}
	
	\begin{remark}
		So far, we discussed only linear properads, i.e., properads in the category of (differential, graded) vector spaces $\mathsf{Vect}$. It follows from the definitions that all our examples discussed so far are linearizations of properads in sets. For example, the (closed) Frobenius properad $\mathcal F$ is a linearization of the terminal $\mathsf{Set}$-properad. This can be compared to the modular operad $\mathtt{Mod}(Com)$, the modular envelope of the cyclic operad $Com$. This modular operad is a linearization of $\mathtt{Mod}(*_C)$\footnote{the modular envelope of the terminal cyclic operad $*_C$ in $\mathsf{Set}$}, the terminal modular operad in $\mathsf{Set}$ \cite{Markl_Modular envelopes}.
		In \cite{Markl_Modular envelopes}, Markl also formulates the following Terminality principle:
		\\
		\\
		{\it{For a large class of geometric objects there exists a version of modular operads such
				that the set of isomorphism classes of these objects is the terminal modular $\mathsf{Set}$-operad of a given
				type.}}
		\\
		\\
		It could be interesting to formulate a similar principle also in the world of properads.
		
	\end{remark}

	\subsection{Cobar complex}
	
	The cobar complex of a properad $\oP$ is a properad denoted by ${C\oP}$. It is the free properad generated by the suspended dual of $\oP$,  with the differential induced by the duals of structure maps.
	Roughly speaking, $C{\oP}$ is spanned by directed graphs with no directed circuits and its vertices are  decorated with elements of $\oP^{\#}$.

	\begin{definition} 
		A graph consists of vertices and half-edges.
		Exactly one end of every half-edge is attached to a vertex.
		The other end is either unattached (such an half-edge is called a leg) or attached to the end of another half-edge (in that case, these two half-edges form an edge). Every end is attached to at most one vertex/end. The half-edge structure for vertex $G_1$ of the graph $\gr{G}$ is indicated on the following picture on the left.
	\end{definition}
	\begin{definition}
		In a directed graph, every half-edge has assigned an orientation such that two half-edges composing one edge have the same orientation. The half-edges attached to each vertex are partitioned into incoming and outgoing half-edges.
		
		A directed circuit in such graph is a set of edges such that we can go along them following their orientation and get back to the point where we started.
	\end{definition}

	We require that to every vertex $V_i$ a nonnegative integer $G_i$ is assigned. We define $$G:=\dim_{\Q}H_1(\gr{G},\Q) + \sum_i G_i$$ to be the genus of the graph. The stable graphs then fulfill the condition
	$$\chi_i=2(G_i-1)+|C_i| + |D_i|>0 ,$$
	for every vertex $V_i$, where $|C_i|$ and $|D_i|$ denotes the number of outgoing resp. incoming half-edges attached to $V_i$. 
	
	Consider a finite directed graph $\gr{G}$ with no directed circuits and with integers $G_i$ assigned to each vertex as is indicated on the picture on the right. 
	
	\begin{center}
		\PICGraph
	\end{center}

	Finally, we require that the incoming legs of $\gr{G}$  are in bijection with the set $D$ and outgoing legs with $C$.\footnote{In \cite{GetzlerModop}, it is shown that the number of  isomorphism classes of (ordinary) stable graphs with legs labeled by the set $[n]$ and with the fixed genus $G$ is finite. The additional conditions on graphs, i.e., being directed with no directed circuits, will obviously not change this.}
	The graph $\gr{G}$ is ``decorated'' by an element $$(\uparrow\! V_1\wedge\cdots\wedge\uparrow\! V_n) \tp (P_1\tp \cdots \tp P_n),$$
	where $V_1,\ldots V_n$ are all vertices of $\gr{G}$, $\uparrow\! V_i$'s are formal elements of degree $+1$, $\wedge$ stands for the graded symmetric tensor product and $P_i\in\oP(C_i, D_i,\chi_i)^{\#}$, for every vertex $V_i$. Then the isomorphism class of $\gr{G}$ together with $(\uparrow\! V_1\wedge\cdots\wedge\uparrow\! V_n) \tp (P_1\tp \cdots \tp P_n)$ is an actual element of ${C\oP}(C,D,\chi)$.
	
	The operation $(\stackrel{\eta}{\ooo{B}{A}})_{{C\oP}}$ is defined by grafting of graphs, attaching together $|A|$ pairs of incoming and outgoing legs with the suitable orientation so that no directed circuits are formed.
	
	The differential $\partial_{{C\oP}}$ on ${C\oP}$ is the sum of the differential $d_{P^{\#}}$ and of the differential given by the dual of $(\stackrel{\eta}{\ooo{B}{A}})$ which adds one vertex $V$, $|A|$ edges attached to it and modifies the decoration of $\gr{G}$. 
	For an explicit formula, it is enough to consider a graph $\gr{G}$ with one vertex. On such a graph we have
	\begin{equation}\label{differencial}
	\partial_{{C\oP}}=  d_{P^{\#}}\otimes\id + \sum_{\substack{C_1 \sqcup C_2=C\\ D_1\sqcup D_2 = D \\ \chi=\chi(\chi_1,\chi_2, A, B,\eta)\\ \chi_1, \chi_2>0 }} \dfrac{1}{|A|!} (\stackrel{{(C_1,D_1\sqcup B, \chi_1)\ }{\eta}{\ (C_2\sqcup A, D_2,\chi_2)}}{\ooo{B}{A}})^{\#}_P \otimes (\uparrow V \wedge \cdot ), \end{equation}
	where 
	\begin{equation}\label{zobrazeni eta}
	(\stackrel{{(C_1,D_1\sqcup B, \chi_1)\ }{\eta}{\ (C_2\sqcup A, D_2,\chi_2)}}{\ooo{B}{A}})^{\#}_P:P(C,D,\chi)^{\#}\rightarrow P(C_1,D_1\sqcup B, \chi_1)^{\#} \otimes P(C_2\sqcup A, D_2,\chi_2)^{\#},\end{equation}
	for stable vertices $(C_1,D_1\sqcup B, \chi_1)$ and $(C_2\sqcup A, D_2,\chi_2)$. 
	For a general stable graph, the differential extends by the Leibniz rule. 
	
	\begin{remark}Here we should clarify the used notation. The sum is over pairs of sets $C_1, C_2$ and $D_1, D_2$ as indicated and also over characteristics $\chi_1, \chi_2$ and the bijection $\eta$\footnote{Notice, that by giving $\eta$ we also identify the sets $A$, $B$ and their size.} such that $\chi_1, \chi_2>0$ and the result of $\stackrel{{(C_1,D_1\sqcup B, \chi_1)\ }{\eta}{\ (C_2\sqcup A, D_2,\chi_2)}}{\ooo{B}{A}}$ gives a component of the given characteristic $\chi$. Such sum is obviously finite.
		
		For example, in the case of closed Frobenius properad where the characteristic is additive the sum is just over $G_1, G_2, \eta$ such that  $1\leq |A|\leq G+1, G_1+ G_2+|A|-1=G$ for a given $G$.
		
		We will use this shortened notation also in the following.
	\end{remark}

	{In the above formula we should make a choice of the ``new vertex'' $V$ out of the two vertices created by the splitting of the original one. 
		Since  we consider only connected directed graphs with no directed circuits, the new $\vert A\vert$ edges in the resulting graph will necessarily start in one vertex and end in the another one. We can choose any of them as the new one but once the choice is made, we have stick to it consistently when extending the differential using the Leibniz rule. The decoration by graded symmetric product of degree one elements then ensures that the $\partial_{{C\oP}}$ is really a differential.}
	
	To avoid problems with duals, we assume that the dg vector space $\oP(C,D,\chi)$ is finite dimensional for any triple $(C,D,\chi)$ whenever ${C\oP}$ appears. 
	This is sufficient for our applications, though it can probably be avoided using coproperads.

	Finally, let us, without going into details, mention the following: The cobar complex of a properad $\oP$ is in fact a double complex with the differentials being the two terms in the above formula (\ref{differencial}). Each component ${C\oP}(C,D,\chi)$ is given by a colimit of $(\bigwedge_{i=1}^{n}\uparrow V_i ) \tp \oP(C,D,\chi)^{\#}$ over all iso classes of directed connected graphs $\gr{G}$ with $n$ vertices with $|D|$ inputs and $|C|$ outputs.
	

	\subsection{\texorpdfstring{The endomorphism properad}{Endomorphism properad}}
	
	Let $(V,d)$ be a (dg) vector space. 
	
	\begin{definition}
		For any set $C$, $|C|=n$, we define the unordered product $\bigodot_{c\in C} V_c$ of the collection of vector spaces $\{V_c\}_{c\in C}$ as the vector space of equivalence classes of usual tensor products
		\begin{equation}
		\label{v_patek_domu_za_Jaruskou}
		v_{\omega(1)} \ot \cdots \ot v_{\omega(n)} \in V_{\omega(1)} 
		\ot \cdots \ot V_{\omega(n)},\ \omega : [n]
		\stackrel\cong\longrightarrow C,
		\end{equation}
		modulo the identifications
		\[
		v_{\omega(1)} \ot \cdots \ot v_{\omega(n)} 
		\sim \epsilon(\sigma)\
		v_{\omega\sigma(1)} \ot \cdots \ot v_{\omega\sigma(n)},\ \sigma \in \Sigma_n,
		\]
		where $\epsilon(\sigma)$ is the Koszul sign of the
		permutation $\sigma$.
	\end{definition}
	
	\begin{lemma}
		\label{bila_nemoc}
		Let
		$\sigma : C \to D$ be an isomorphism of finite sets, $\{V_c\}_{c\in C}$
		and $\{W_d\}_{d\in D}$ collections of graded vector spaces, $V_c = W_d =V$ for all $c \in C$, $d \in D$.
		Then the assignment
		\[
		\bigodot_{c\in C}V_c \ni \big[v_{\omega(1)}\tp \cdots \tp v_{\omega(n)}\big]
		\longmapsto \big[w_{\sigma\omega(1)}\tp \cdots \tp
		w_{\sigma\omega(n)}\big]
		\in \bigodot_{d \in D}V_d
		\]
		with
		$w_{\sigma\omega(i)} := v_{\omega(i)} \in
		V_{\sigma\omega(i)}$, $1 \leq i
		\leq n$,
		defines a natural map
		\[
		\overline\sigma :
		\bigodot_{c \in C}V_c \to \bigodot_{d \in D}V_d
		\]
		of unordered products
	\end{lemma}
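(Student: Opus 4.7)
The content of the lemma is really just well-definedness and functoriality of the prescription, so the strategy is to check that (a) the formula is independent of the choice of representative bijection $\omega:[n]\xrightarrow{\cong}C$, (b) it respects the Koszul equivalence relation defining $\bigodot_{d\in D}V_d$, and (c) the resulting $\overline\sigma$ behaves functorially in $\sigma$. I expect step (a)+(b) to be the only step where anything actually happens; everything else is bookkeeping.

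First, I fix $\omega:[n]\xrightarrow{\cong}C$ and define $\overline\sigma$ on the representative $v_{\omega(1)}\tp\cdots\tp v_{\omega(n)}$ by the formula in the statement. Linearity on the span of such representatives is then automatic, so the only content is that if I start from a different representative $\omega':=\omega\tau$ for some $\tau\in\Sigma_n$, the two resulting elements in $\bigodot_{d\in D}V_d$ differ exactly by the sign $\epsilon(\tau)$ required by the equivalence relation in the target.

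To verify this, I compute both sides explicitly. On the source side,
\[
v_{\omega\tau(1)}\tp\cdots\tp v_{\omega\tau(n)} \;\sim\; \epsilon(\tau)\, v_{\omega(1)}\tp\cdots\tp v_{\omega(n)}
\]
in $\bigodot_{c\in C}V_c$. On the target side, applying the prescription to $\omega'=\omega\tau$ directly gives $w_{\sigma\omega\tau(1)}\tp\cdots\tp w_{\sigma\omega\tau(n)}$ with $w_{\sigma\omega\tau(i)}=v_{\omega\tau(i)}$. Because all spaces $V_c$ and $W_d$ are the same graded vector space $V$, the entries at positions permuted by $\tau$ carry the same degrees in the source and in the target; hence the Koszul sign produced when bringing this back to the ordering $\sigma\omega(1),\ldots,\sigma\omega(n)$ is again $\epsilon(\tau)$. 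Equating the two expressions,
\[
\big[w_{\sigma\omega\tau(1)}\tp\cdots\tp w_{\sigma\omega\tau(n)}\big]
= \epsilon(\tau)\,\big[w_{\sigma\omega(1)}\tp\cdots\tp w_{\sigma\omega(n)}\big]
\]
in $\bigodot_{d\in D}V_d$, which matches the $\epsilon(\tau)$ picked up on the source side. This shows that $\overline\sigma$ descends to a well-defined map of unordered products.

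Finally, for naturality it suffices to note that for a second isomorphism $\sigma':D\xrightarrow{\cong}E$ the composition $\overline{\sigma'}\,\overline\sigma$ and the map $\overline{\sigma'\sigma}$ have the same effect on a representative: both send $v_{\omega(i)}$ in position $\omega(i)$ to the same vector in position $\sigma'\sigma\omega(i)$, with no additional signs since no reordering is involved. Likewise $\overline{\id_C}=\id_{\bigodot V_c}$. The potential obstacle in the whole argument is entirely in tracking the Koszul signs in the previous paragraph; once one observes that the degrees of the entries being permuted are unchanged by $\sigma$ (because $V_c=W_d=V$ throughout), the two signs are forced to agree and there is nothing deeper to verify.
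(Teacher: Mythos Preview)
Your proof is correct and is exactly the direct verification the paper has in mind; the paper's own proof is literally the sentence ``A direct verification.'' You have simply spelled out the sign check that the paper leaves implicit.
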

	
	\begin{proof}
		A direct verification.
	\end{proof}
	
	\begin{lemma}
		\label{Tequila_v_lednici}
		For disjoint finite sets $C', C''$, one has a canonical isomorphism
		\[
		\bigodot_{c' \in C'} V_{c'} \tp \bigodot_{c'' \in C''}V_{c''} 
		\cong
		\bigodot_{c\in C'\sqcup\, C''} V_c.
		\]
	\end{lemma}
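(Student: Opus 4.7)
The plan is to construct the forward map by concatenation of representatives, then build the inverse by a sorting procedure, and finally check that the Koszul signs conspire to make both maps well-defined on equivalence classes.

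Let $n' := |C'|$ and $n'' := |C''|$. First, choose bijections $\omega' : [n'] \xrightarrow{\cong} C'$ and $\omega'' : [n''] \xrightarrow{\cong} C''$, and define $\omega' \sqcup \omega'' : [n'+n''] \xrightarrow{\cong} C' \sqcup C''$ by $i \mapsto \omega'(i)$ for $1 \leq i \leq n'$ and $i \mapsto \omega''(i - n')$ for $n' < i \leq n'+n''$. Define the forward map $\Phi$ on representatives by
\[
\big[v_{\omega'(1)} \tp \cdots \tp v_{\omega'(n')}\big] \tp \big[v_{\omega''(1)} \tp \cdots \tp v_{\omega''(n'')}\big]
\;\longmapsto\;
\big[v_{\omega'(1)} \tp \cdots \tp v_{\omega'(n')} \tp v_{\omega''(1)} \tp \cdots \tp v_{\omega''(n'')}\big],
\]
where the target class is taken with respect to $\omega' \sqcup \omega''$. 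Replacing $\omega'$ by $\omega'\sigma'$ and $\omega''$ by $\omega''\sigma''$ with $\sigma' \in \Sigma_{n'}$, $\sigma'' \in \Sigma_{n''}$ changes $\omega' \sqcup \omega''$ by the block permutation $\sigma' \sqcup \sigma''$, whose Koszul sign factors as $\epsilon(\sigma')\epsilon(\sigma'')$; this matches the signs acquired in the two tensor factors on the left, so $\Phi$ descends to equivalence classes and is manifestly bilinear.

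For the inverse $\Psi$, I would show first that every class in $\bigodot_{c \in C' \sqcup C''} V_c$ admits a \emph{separated} representative, i.e.\ one coming from a bijection $\omega : [n'+n''] \xrightarrow{\cong} C' \sqcup C''$ sending $[n']$ onto $C'$ and $\{n'+1,\dots,n'+n''\}$ onto $C''$. Given an arbitrary $\omega$, an unshuffle $\sigma \in \Sigma_{n'+n''}$ (cf.\ the unshuffle discussion in Remark \ref{poznamka s unshuffly}) can be chosen so that $\omega\sigma$ has this separation property; the resulting representative differs from the original by the Koszul sign $\epsilon(\sigma)$, which is already accounted for by the equivalence relation defining the unordered product. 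On a separated representative I set
\[
\Psi\big[v_{\omega(1)} \tp \cdots \tp v_{\omega(n'+n'')}\big] := \big[v_{\omega(1)} \tp \cdots \tp v_{\omega(n')}\big] \tp \big[v_{\omega(n'+1)} \tp \cdots \tp v_{\omega(n'+n'')}\big].
\]
Independence of the separated representative reduces to the same block-sign computation as for $\Phi$: two separated bijections differ by an element of $\Sigma_{n'} \times \Sigma_{n''}$, whose Koszul sign factors correctly across the two factors on the right.

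It is then immediate from the definitions that $\Phi \circ \Psi$ and $\Psi \circ \Phi$ are the respective identities, since both maps are essentially identity on separated representatives. The main technical point is the Koszul-sign bookkeeping in checking that sorting an interleaved representative into separated form produces precisely the sign needed to make $\Psi$ well-defined; this is the step that really uses that the equivalences in the unordered products are tracked by the Koszul sign convention of \eqref{v_patek_domu_za_Jaruskou}. Naturality in $C', C''$ (hence the word ``canonical'') follows from Lemma \ref{bila_nemoc} applied to the block bijections.
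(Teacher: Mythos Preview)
Your proof is correct and follows the same approach as the paper: the forward map is precisely the concatenation map via the bijection $\omega'\sqcup\omega''$ that the paper writes down. The paper's proof stops after displaying that assignment, leaving well-definedness and invertibility as understood; your version simply supplies those details (the block-permutation Koszul-sign check and the unshuffle-then-split inverse), so there is no substantive difference in method.
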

	
	\begin{proof}
		Each $\omega' : [n]  \stackrel\cong\to 
		C'$ and $\omega'' : [m] \stackrel\cong\to 
		C''$ determine an isomorphism
		\[
		\omega' \sqcup \omega'' : [n+m] \stackrel\cong\longrightarrow 
		C'\sqcup C''
		\]
		by the formula
		\[
		(\omega' \sqcup \omega'')(i) :=
		\begin{cases}
		\omega'(i),&\hbox{if $1 \leq i \leq n$, and}
		\cr
		\omega''(i-n),&\hbox{if $n < i \leq n+m$\ .}
		\end{cases}
		\]
		The isomorphism of the lemma is then given by the assignment
		\[
		[v_{\omega'(1)} \tp \cdots \tp v_{\omega'(n)}] \tp
		[v_{\omega''(1)} \tp \cdots \tp v_{\omega''(m)}]
		\mapsto [v_{(\omega' \sqcup\, \omega'')(1)} \tp \cdots \tp 
		v_{(\omega' \sqcup\, \omega'')(n+m)}].
		\]
	\end{proof}
	
	\begin{example}\label{unordIso}
		Let $C = \{c_1,\ldots,c_n\}$. By iterating Lemma~\ref{Tequila_v_lednici}
		one obtains a canonical isomorphism
		\[
		\bigodot_{c \in C} V_c \cong V_{c_1} \tp \cdots \tp V_{c_n}
		\]
		which, crucially, depends on the order of elements of $C$.
		In particular, for $C=[n]$, $V_c = V$, $c\in C$, we have an iso $\iota_\psi: V^{\otimes |n|} \to \bigodot_{[n]} V$ for every permutation $\psi$. In particular, we have the iso $\iota_n:=\iota_{1_{[n]}}$ corresponding to the natural ordering on the set $[n]$.
	\end{example}
	
	We are finally ready to define the endomorphism properad $\oEnd{V}$.
	
	\begin{definition}\label{Dnes_prednaska_v_Myluzach}
		For $(C,D)\in\DCor$, $\chi >0$ define $$\oEnd{V}(C,D,\chi) := \op{Hom}_{\K}(\bigodot_D V,\bigodot_C V).$$
		
		Let $\bar f\in \op{Hom}_{\K}(V_{d_1} \tp \cdots \tp V_{d_n},V_{c_1} \tp \cdots \tp V_{c_m})$ correspond to $f\in \op{Hom}_{\K}(\bigodot_D V,\bigodot_C V)$, under the above isomorphism in Example \ref{unordIso}. Then the differential on $\oEnd{V}$ is given, by abuse of notation, as
		\begin{equation}\label{diferencial endomorphismove}
		d(\bar f) = \sum_{i=0}^{m-1}(\id^{\tp i}\tp d\tp\id^{\tp m-i-1}) \bar f-(-1)^{\dg{\bar f}} \sum_{i=0}^{n-1} \bar f  (\id^{\tp i}\tp d\tp\id^{\tp n-i-1})
		\end{equation}
		
		Given a morphism $(\rho,\sigma) : (C,D) \to (C',D')$ in $\DCor$, define
		\begin{align*}
		\oEnd{V}(\rho,\sigma)  : \oEnd{V}(C,D,\chi) &\to \oEnd{V}(C',D',\chi )\\
		f&\mapsto\overline \rho \,f \, \overline \sigma,
		\end{align*}
		for $f\in \op{Hom}_{\K}(\bigodot_D V,\bigodot_C V)\in \oEnd{V}(C,D,\chi)$ and $\overline\rho,\,\overline\sigma$ as in~Lemma \ref{bila_nemoc}.
		
		For
		$f \in \oEnd{V}\big(C_2\sqcup A ,D_2, \chi_2\big)$ and $g \in \oEnd{V}\big(C_1, D_1 \sqcup B, \chi_1\big)$ let $$g\stackrel{\eta}{\ooo{B}{A}}f \in \oEnd{V}\big(C_1 \sqcup
		C_2, D_1,  \sqcup
		D_2, \chi\big)$$ be the composition
		\begin{align*}
		\bigodot_{d \in D_1 \sqcup\ D_2} V_{d} &\stackrel\cong\longrightarrow
		\bigodot_{d \in D_1} V_{d} \tp 
		\bigodot_{d' \in D_2} V_{d'}
		\stackrel{\id \tp f}{\longrightarrow}\bigodot_{d \in D_1} V_{d}  \tp \bigodot_{c \in C_2\sqcup A} V_{c}
		\\ 
		& 
		\stackrel\cong\longrightarrow\bigodot_{d \in D_1}V_{d}\tp \bigodot_{ a\in A} V_{a} \tp \bigodot_{c \in C_2} V_{c}
		\stackrel{1\tp \eta^{-1}\tp1}{\longrightarrow} \bigodot_{d \in D_1}V_{d}\tp \bigodot_{ b\in B} V_{b} \tp \bigodot_{c \in C_2} V_{c}
		\\
		&
		\stackrel\cong\longrightarrow  \bigodot_{d \in D_1 \sqcup B} V_{d}
		\tp
		\bigodot_{c \in C_2} V_{c} \stackrel{g \tp \id}{\longrightarrow} \bigodot_{c \in C_1} V_{c} \tp 
		\bigodot_{c' \in C_2} V_{c'}\stackrel\cong\longrightarrow \bigodot_{c \in C_1 \sqcup\ C_2} V_{c}
		\end{align*}
		in which the isomorphisms are easily identified with those of
		Lemma~\ref{Tequila_v_lednici}.
	\end{definition}
	We leave as an exercise to verify that the collection $$\oEnd{V} =
	\{\oEnd{V}(C,D, \chi)|(C,D) \in \DCor, \chi >0\}$$ with the above  operations is a properad.

	It is now straightforward to describe the skeletal version $\bar{\mathcal{E}}_V$ of the endomorphism properad $\mathcal{E}_V$ 
	$$\bar{\mathcal{E}}_V (m,n,\chi)= \op{Hom}_{\K}(V^{\tp n},
	V^{\tp m})\cong V^{\tp m}\otimes V^{\#{\tp n}},$$
	where the last isomorphism is explicitly for $v_1\tp\ldots\tp v_m \tp\alpha_1\tp\ldots \tp \alpha_ n \in V^{\tp m}\otimes V^{\#{\tp n}}$ 
	$$ v_1\tp\ldots\tp v_m \tp\alpha_1\tp\ldots \tp \alpha_ n: w_n\tp\ldots\tp w_1 \mapsto \alpha_1(w_1)\ldots \alpha_n(w_n)v_1\tp\ldots\tp v_m$$ 
	
	For $(\rho,\sigma)\in \Sigma_m\times \Sigma_n$, 
	$$(\rho,\sigma): v_1\tp\ldots\tp v_m\tp  \alpha_1\tp\ldots\tp \alpha_n\mapsto \pm\ v_{\rho^{-1}(1)}\tp\ldots\tp v_{\rho^{-1}(m)}\tp \alpha_{\sigma^(1)}\tp\ldots\tp \alpha_{\sigma(n)},$$
	where $\pm$ is the product of the respective Koszul signs corresponding to permutations $\rho$ and $\sigma$.
	
	The differential $d$ is given by the natural  extension of $d$ on $V$, as a degree one derivation, to $  V^{\tp m}\otimes V^{\# \tp n}$.\footnote{Recall, $(d\alpha)(v)= (-1)^{|\alpha|}\alpha(dv)$.}
	
	Finally, the operations $\stackrel{\xi}{\bar{\ooo{N}{M}}}$ are described as follows. Let $N = n_1+ [|N|]\subset [n_1 +|N|]$, $M = [|N|]\subset [m_2+|N|]$ and $\xi(n_1+|N|-i+1)=i$
	then $\stackrel{\xi}{\bar{\ooo{N}{M}}}$ is defined by the following assignment:
	\begin{align*}
	\stackrel{\xi}{\bar{\ooo{N}{M}}}:\,\,&(v_1\tp \ldots\tp v_{m_1}\tp\alpha_1\tp\ldots\tp \alpha_{n_1+|N|})
	\tp(w_1\ldots\tp w_{m_2+|N|}\tp\beta_1\tp\ldots\tp \beta_{n_2}  )\\ 
	& \hspace{-2.7em} \mapsto \pm \Pi_{i=1}^{|N|} \alpha_{n_1+|N|-i+1}(w_{i}) v_1\!\tp\!\ldots\! \tp v_{m_1}\!\tp w_{|N| +1}\!\tp\! \ldots \!\tp w_{m_2+|N|}\!\tp\! \alpha_1\!\tp\!\ldots\!\tp \alpha_{n_1}\!\tp \!\beta_{1}\!\tp\!\ldots \!\tp\! \beta_{n_2},
	\end{align*}
	where $\pm$ is the obvious Koszul sign, coming from commuting consecutively $w_{|N|+1}, \ldots, w_{m_2+|N|}$ trough $\alpha_{n_1}\otimes \ldots\tp \alpha_1 $.
	The general case is then easily determined by the equivariance of the operations $\stackrel{\xi}{\bar{\ooo{N}{M}}}$.
	
	\begin{remark} The above introduced skeletal version of the endomorphism properad is equivalent to the one which uses unordered tensor products  
		$\otimes_{[n]}V$ instead of ordinary ones $V^{\otimes n}$. This is possible due to  Example \ref{unordIso} according to which we have the canonical isomorphism $\otimes_{[n]}V\cong V^{\otimes n}$ corresponding to the natural ordering on $[n]$.
	\end{remark}
	
	Finally, we briefly discuss the $2$-colored version of the endomorphism properad.
	\begin{definition}
		Let $V_{\uo}\oplus V_{\uc}$ be an abbreviation for the direct sum of dg vector spaces $(V_{\uo},d_{\uo})$ and $(V_{\uc},d_{\uc})$.
		Let 
		$$\oEnd{V_{\uo}\oplus V_{\uc}}((O_1,O_2),(C_1, C_2),\chi) := \op{Hom}_{\K}(\bigodot_{O_2} V_{\uo} \tp \bigodot_{C_2} V_{\uc},\bigodot_{O_1} V_{\uo} \tp \bigodot_{C_1} V_{\uc}).$$
		
		The $\Sigma$-action and the operations are defined analogously to the $1$-colored case.  
		
		The notion of cobar complex of a $2$-colored properad is defined using a suitable definition of $2$-colored directed graphs.
		We leave it to the reader to fill in the details.
	\end{definition}
	\subsection{\texorpdfstring{Algebra over a properad}{Algebra over a properad}} \label{SECTAlgOverTwisted}
	
	\begin{definition} \label{DEFAlgOverTwisted}
		Let $\oP$ be a properad.
		An algebra over $\oP$ on a dg vector space $V$ is a properad morphism $$\alpha : \oP \to \oEnd{V},$$
		i.e. it is a collection of dg vector space morphisms 
		$$\{\alpha(C,D,\chi):\oP(C,D,\chi) \to\oEnd{V}(C,D,\chi)\ | \ (C,D)\in\DCor, \chi>0\}$$ 
		such that (in the sequel, we drop the notation $(C,D,\chi)$ at $\alpha(C,D, \chi)$, for brevity)
		\begin{enumerate}
			\item $\alpha \comp \oP(\rho,\sigma) = \oEnd{V}(\rho, \sigma) \comp \alpha$ for any morphism $(\rho, \sigma)$ in $\DCor$
			\item $\alpha \comp ({\stackrel{\eta}{\ooo{B}{A}}})_{\oP} = ({\stackrel{\eta}{\ooo{B}{A}}})_{\oEnd{V}} \comp (\alpha\tp\alpha)$
			
			Algebra over a $2$-colored properad is again defined by replacing $\DCor$ by $\DCor_2$. 
		\end{enumerate}
	\end{definition}
	
	In practice, however, one is rather interested in skeletal version of $\alpha$'s, i.e., $\Sigma_m\times \Sigma_n$-equivariant maps 
	$$\alpha(m,n,\chi): \bar{\oP}(m,n,\chi)\to \bar{\mathcal{E}}_V(m,n,\chi)$$ intertwining between the respective $\stackrel{\xi}{\bar{\ooo{N}{M}}}$ 
	operations.

	\begin{remark}
		Note that the above formula 2. is compatible with any composition law for the degree $G$, or equivalently for the Euler characteristic $\chi$. This is because, for fixed values of $m$ and $n$, the vector spaces $\mathcal{E}_V(m,n,\chi)$ are independent of the actual value of $\chi$.  So we always can  choose the composition law for $\chi$ in the endomorphism properad $\mathcal{E}_V$ so that it respects the one for $\mathcal{\oP}$.
	\end{remark}
	
	\subsection{\texorpdfstring{Algebra over the cobar complex}{Algebra over the Feynman transform}}
	
	The following theorem is essentially the only thing we need from the theory of the cobar transform.
	Compare to Feynman transform for modular operads \cite{BarannikovModopBV}. 
	
	In order to describe an algebra over the cobar complex, it is enough to consider graphs with one vertex.
	
	\begin{theorem} \label{LEMMAAlgOverFeynTrans}
		An algebra over the cobar complex $C{\oP}$ of a properad $\oP$ on a dg vector space $V$ is uniquely determined by a collection of degree $1$ linear maps
		$$\set{\alpha(C,D,\chi):\oP(C,D,\chi)^{\#}\to\oEnd{V}(C,D,\chi)}{(C,D)\in\DCor, \chi>0 },$$
		(no compatibility with differential on $\oP(C,D,\chi)^{\#}$!) such that 
		$$
		\oEnd{V}(\rho, \sigma) \comp \alpha(C,D,\chi) = \alpha(C',D',\chi) \comp \oP(\rho^{-1},\sigma^{-1})^{\#} \label{EQAlfOverFTOne}$$
		for any pair of bijections $(\rho,\sigma):(C,D)\xrightarrow{\sim} (C',D')$  and 
		\begin{align}
		d \comp \alpha(C,D,\chi) &=\alpha(C,D,\chi) \comp d_{\oP^{\#}}   + \nonumber \\
		&\hspace{-5em}     +  \sum_{\substack{C_1\sqcup C_2 = C \\D_1\sqcup D_2 = D\\ \chi=\chi(\chi_1,\chi_2,A,B,\eta)\\\chi_1,\chi_2>0\\}} \dfrac{1}{|A|!}(\stackrel{\eta}{\ooo{B}{A}})_{\oEnd{V}} \comp \left( \alpha(C_1, D_1\sqcup B,\chi_1)\tp\alpha(C_2\sqcup A,D_2,\chi_2) \right) \comp (\stackrel{\eta}{\ooo{B}{A}})^{\#}_{\oP}\label{EQAlfOverFTTwo},
		\end{align}
		where $(\stackrel{\eta}{\ooo{B}{A}})^{\#}_{\oP}$ is a shorthand notation for $(\stackrel{{(C_1,D_1\sqcup B, \chi_1)\ }{\eta}{\ (C_2\sqcup A, D_2,\chi_2)}}{\ooo{B}{A}})^{\#}_P$ from (\ref{zobrazeni eta})
		$$(\stackrel{\eta}{\ooo{B}{A}})^{\#}_{\oP} : \oP(C, D,\chi)^{\#} \to \oP(C_1,D_1 \sqcup B,\chi_1)^{\#} \tp \oP(C_2, D_2 \sqcup A,\chi_2)^{\#}.$$ 
	\end{theorem}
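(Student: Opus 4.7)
The plan is to exploit the fact that the cobar complex $C\oP$, as a graded (non-differential) properad, is the free properad on the $\Sigma$-bimodule $\uparrow\! \oP^{\#}$. By the universal property of free properads, any graded properad morphism $A : C\oP \to \oEnd{V}$ is uniquely determined by its $\Sigma$-equivariant restriction to one-vertex generators $\uparrow\!\oP(C,D,\chi)^{\#} \to \oEnd{V}(C,D,\chi)$. Desuspending, this restriction is exactly a family of degree $1$ linear maps $\alpha(C,D,\chi) : \oP(C,D,\chi)^{\#} \to \oEnd{V}(C,D,\chi)$, and the required $\Sigma$-equivariance of $A$ translates into the first displayed identity of the theorem. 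The value of $A$ on a multi-vertex graph is then the iterated properadic composition of the $\alpha$'s dictated by the graph combinatorics, so existence and uniqueness at the level of graded properads are automatic.

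The remaining condition is compatibility with the differentials, $d_{\oEnd{V}} \circ A = A \circ \partial_{C\oP}$. I would first verify it on one-vertex graphs. On a one-vertex graph decorated by $\uparrow\! V \tp P$ with $P \in \oP(C,D,\chi)^{\#}$, formula (\ref{differencial}) expresses $\partial_{C\oP}$ as the sum of $d_{\oP^{\#}}(P)$ (still on a one-vertex graph) and, for each splitting $(C_1\sqcup C_2,D_1\sqcup D_2)$, each admissible pair $(\chi_1,\chi_2)$ and each bijection $\eta: B \to A$, the two-vertex graph decorated by $(\stackrel{\eta}{\ooo{B}{A}})^{\#}_{\oP}(P)$, weighted by $1/|A|!$. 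Applying $A$ and using that $A$ on a two-vertex graph equals the $\oEnd{V}$-composition of the two corresponding $\alpha$'s, one recovers term by term the right-hand side of (\ref{EQAlfOverFTTwo}). Conversely, the same calculation shows that imposing (\ref{EQAlfOverFTTwo}) is equivalent to the differential compatibility on one-vertex graphs; and since both $\partial_{C\oP}$ and $d_{\oEnd{V}} \circ A$ extend to multi-vertex graphs by the Leibniz rule with respect to graph grafting, agreement on one-vertex graphs forces agreement on all of $C\oP$. The reverse direction of the theorem, that an algebra over $C\oP$ gives rise to such $\alpha$'s, is obtained by simply reading off the restriction of $A$ to one-vertex graphs.

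The main technical obstacle is the careful bookkeeping of signs. The suspension on vertex decorations together with the graded symmetric product $\uparrow\! V_1 \wedge \cdots \wedge \uparrow\! V_n$ produce Koszul signs that must match those in the definition (\ref{diferencial endomorphismove}) of $d_{\oEnd{V}}$ and in the properadic composition of $\oEnd{V}$; the combinatorial factor $1/|A|!$ in (\ref{EQAlfOverFTTwo}) has to correctly account for summation over bijections $\eta$ with $|A|=|B|$ fixed; and the degree shift from $0$ for $A$ to $1$ for $\alpha$ must be tracked consistently through the dualisation $\oP(\rho^{-1},\sigma^{-1})^{\#}$ appearing in the equivariance identity. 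These are parallel to the Feynman-transform sign computations for modular operads in \cite{BarannikovModopBV}, adapted to the properadic setting where directedness and the no-directed-circuit condition govern the asymmetry between inputs and outputs.
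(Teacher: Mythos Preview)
Your approach is correct and is the standard one: freeness of $C\oP$ as a graded properad reduces a morphism to its values on one-vertex graphs, equivariance gives the first identity, and the differential condition checked on one-vertex graphs via formula~(\ref{differencial}) yields~(\ref{EQAlfOverFTTwo}), with the Leibniz rule handling the extension to multi-vertex graphs.

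The paper, however, does not actually supply a proof of this theorem. It is stated as ``essentially the only thing we need from the theory of the cobar transform'' and is treated as a known structural fact, parallel to Barannikov's result \cite{BarannikovModopBV} for the Feynman transform of modular operads; only the skeletal reformulation (Lemma~\ref{LEMMAAlgOverFeynTrans1}) is then recorded, again without proof. So there is no argument in the paper to compare yours against---your outline is precisely the argument one would expect the authors to have in mind, and it fills the gap they left.
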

	
	It will also be useful to have the skeletal version of the above theorem.
	
	\begin{lemma} \label{LEMMAAlgOverFeynTrans1}
		Algebra over the cobar complex $C{\oP}$ of a properad $\oP$ on a dg vector space $V$ is uniquely determined by a collection
		$$\set{\bar{\alpha}(m,n,\chi):\bar{\oP}(m,n,\chi)^{\#}\to\bar{\mathcal E}_{V}(m,n,\chi)}{([n],[m])\in\DCor}$$
		of degree $1$ linear maps (no compatibility with differential on $\bar{\oP}(m,n,\chi)^{\#}$!) such that\footnote{In the sequel, we simplify the notation a bit further: the $(m,n,\chi)$ at ${\bar{\alpha}}(m,n,\chi)$ is usually omitted and so is the symbol $\circ$ for composition of maps.}
		$$
		\bar{\mathcal E}_{V}(\rho, \sigma)\bar{\alpha} = \bar{\alpha} \bar{\oP}(\rho^{-1},\sigma^{-1})^{\#} \label{EQAlfOverFTOne}$$
		for any pair $(\rho,\sigma)\in \Sigma_m \times \Sigma_n$  and 
		\begin{align}
		d  \bar{\alpha} &= \bar{\alpha} d_{\bar{\oP}^{\#}}   + \nonumber \\
		&\hspace{-1.5em} + \!\!\!\!\!\!\!\!\!\!\!\!\!\!\!\! \sum_{\substack{C_1\sqcup C_2 = [m] \\D_1\sqcup D_2 = [n] \\  \,\,\,\,\,\,\,\chi=\chi(\chi_1,\chi_2,A, B,\eta)\\\chi_1,\chi_2>0}} \!\!\!\!\!\!\!\!\!\!\!\!\!\!\!   \oEnd{V}(\kappa_1\!\sqcup\kappa_2\rho_A^{-1}, \lambda_1\rho_B^{-1}\sqcup\lambda_2) (\!\stackrel{\kappa_2^{-1}\eta\lambda_1}{\bar{\ooo{\lambda_1^{-1}(B)}{\kappa_2^{-1}(A)}}})_{\oEnd{V}} \left(\bar{\alpha}\tp\bar{\alpha}\right) \left(\oP(\kappa_1,\lambda_1)^{\#}\!\otimes \! \oP(\kappa_2,\lambda_2)^{\#}\right) (\!\stackrel{\eta}{\ooo{B}{A}})^{\#}_{\bar{\oP}} \label{EQAlfOverFTTwo},
		\end{align}
		where $\kappa_1: [|C_1|] \xrightarrow{\sim} C_1, \lambda_1: [|D_1|+|B|] \xrightarrow{\sim} D_1\sqcup B, \kappa_2: [|C_2|+|A|] \xrightarrow{\sim} C_2\sqcup A, \lambda_2: [|D_2|]\xrightarrow{\sim} D_2$, $\rho_B: [|D_1|+|B|] -B \xrightarrow{\sim} |D_2|+[|D_1|],  \rho_A: [|C_2|+|A|] -A \xrightarrow{\sim} |C_1|+[|C_2|]$ 
		are arbitrary bijections.
	\end{lemma}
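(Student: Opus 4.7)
The plan is to derive the lemma from Theorem \ref{LEMMAAlgOverFeynTrans} by restriction to skeletal objects, together with the defining formula of the skeletal composition $\stackrel{\xi}{\bar{\ooo{N}{M}}}$ given in the definition of $\bar{\oP}$. Define $\bar{\alpha}(m,n,\chi):=\alpha([m],[n],\chi)$; since $\bar{\oP}(m,n,\chi)=\oP([m],[n],\chi)$ and $\bar{\mathcal{E}}_V(m,n,\chi)=\oEnd{V}([m],[n],\chi)$, this yields a collection of degree-$1$ maps of the required shape. The equivariance condition of the lemma is then just the equivariance condition of Theorem \ref{LEMMAAlgOverFeynTrans}, specialized to $(\rho,\sigma)\in\Sigma_m\times\Sigma_n=\mathrm{Aut}_{\DCor}([m],[n])$.

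For the master equation, I specialize \eqref{EQAlfOverFTTwo} to $(C,D)=([m],[n])$ and process each summand, indexed by a decomposition $C_1\sqcup C_2=[m]$, $D_1\sqcup D_2=[n]$, characteristics $\chi_1,\chi_2>0$, and a bijection $\eta:B\xrightarrow{\sim}A$. The factors $\alpha(C_i,\ldots,\chi_i)$ are not of skeletal form, so I pick the auxiliary bijections $\kappa_i,\lambda_i$ appearing in the statement and use the skeletal equivariance of $\bar{\alpha}$ just established to rewrite each of them as $\oEnd{V}(\kappa_i,\lambda_i)\,\bar{\alpha}(\cdot,\cdot,\chi_i)\,\oP(\kappa_i,\lambda_i)^{\#}$. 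The rightmost factors $\oP(\kappa_i,\lambda_i)^{\#}$ then slot against $(\stackrel{\eta}{\ooo{B}{A}})^{\#}_{\oP}$ as displayed in the lemma, while the leftmost $\oEnd{V}(\kappa_i,\lambda_i)$ together with $(\stackrel{\eta}{\ooo{B}{A}})_{\oEnd{V}}$ recombine, via the very formula defining $\stackrel{\xi}{\bar{\ooo{N}{M}}}$ read off now inside $\bar{\mathcal{E}}_V$, into $\oEnd{V}(\kappa_1\sqcup\kappa_2\rho_A^{-1},\lambda_1\rho_B^{-1}\sqcup\lambda_2)\circ(\stackrel{\kappa_2^{-1}\eta\lambda_1}{\bar{\ooo{\lambda_1^{-1}(B)}{\kappa_2^{-1}(A)}}})_{\bar{\mathcal{E}}_V}$, which is precisely the expression required.

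The main non-routine point is independence of the resulting sum from the auxiliary choices $\kappa_i,\lambda_i$ (and the induced $\rho_A,\rho_B$); this is guaranteed by the skeletal equivariance established in the first step together with the naturality observations in Remark \ref{poznamka s unshuffly}. For the converse direction, given any $\bar{\alpha}$ satisfying the hypotheses, one reconstructs $\alpha(C,D,\chi)$ by choosing any bijection $([m],[n])\xrightarrow{\sim}(C,D)$ and transporting via equivariance; this is well-defined because of the $\Sigma_m\times\Sigma_n$-equivariance of $\bar{\alpha}$, and the axioms of Theorem \ref{LEMMAAlgOverFeynTrans} follow by running the chain of identifications above in reverse. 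This completes the proposed argument.
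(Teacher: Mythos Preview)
Your proposal is correct and matches the paper's (implicit) approach: the paper states this lemma as the skeletal version of Theorem \ref{LEMMAAlgOverFeynTrans} and gives no separate proof, so deriving it by restricting to $(C,D)=([m],[n])$, using the general equivariance of $\alpha$ to pass through the bijections $\kappa_i,\lambda_i$, and then invoking the defining formula for $\stackrel{\xi}{\bar{\ooo{N}{M}}}$ in $\bar{\mathcal{E}}_V$ is exactly what is intended. Your remark on independence from the auxiliary choices and the converse reconstruction via transport along a chosen bijection $([m],[n])\xrightarrow{\sim}(C,D)$ is the right way to close the argument.
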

	\begin{remark}
		The above discussion straightforwardly carries over to the $2$-colored case, the reader can easily fill in the details.
	\end{remark}
	\subsection{Barannikov's type theory} \label{SSECBarannikov}
	{In Theorem 1 of \cite{BarannikovModopBV}, Barannikov observed that an algebra over the Feynman transform of modular operad $\mathcal{P}$ is equivalently
		described as a solution of a certain master equation in an algebra succinctly defined in terms of $\mathcal{P}$, cf. also Theorem 20 in \cite{DJM}. 
		In this section, we formulate the corresponding theorem for properads in our formalism and then adapt it to our applications.} 
	
	Assume $C_1,D_1,C_2,D_2, \kappa_1, \lambda_1, \kappa_2, \lambda_2$ are given as in lemma \ref{LEMMAAlgOverFeynTrans1}.
	
	\begin{definition}
		For a properad $\oP$, define 
		\begin{gather*}
		P(m,n,\chi):=\,^{\Sigma_m}\left(\oP([m],[n],\chi)\tp\oEnd{V}([m],[n],\chi)\right)^{\Sigma_n} \\
		P:= \prod_{\substack{n\geq 0,m\geq 0\\ \chi> 0}} P(m,n, \chi)
		\end{gather*}
		with $P(m,n,\chi)$ being the space of invariants under the diagonal $\Sigma_m \times \Sigma_n$ action on the tensor product.
		Let $P$ be equipped with a differential, given for $f\in P(m,n,\chi)$, by
		\begin{equation}\label{diferencial v baranikovi}
		d(f) := \left(d_{\oP([m],[n],\chi)}\tp\id_{\oEnd{V}([m],[n],\chi)}-\id_{\oP([m],[n],\chi)}\tp d_{\oEnd{V}([m],[n],\chi)} \right) (f),
		\end{equation}
		and a composition $\circ$ described as follows: Assume $ g\in P(m_1,n_1 +|B|,\chi_1),\ h\in P(m_2+|A|,n_2,\chi_2)$ and $|A|=|B|$, then the $(m=m_1+m_2, n=n_1+n_2, \chi=\chi(\chi_1, \chi_2, A, B, \eta))$ component of the composition $g\circ h$ is given by
		$$
		\sum\left((\stackrel{\eta}{\ooo{B}{A}})_{\oP} \tp (\stackrel{\eta}{\ooo{B}{A}})_{\oEnd{V}}\right) \sigma_{23}(\oP(\kappa_1,\lambda_1)\tp\oEnd{V}(\kappa_1,\lambda_1)\tp\oP(\kappa_2,\lambda_2)\tp\oEnd{V}(\kappa_2,\lambda_2)) (g\tp h).$$
		The differential and composition are extended by infinite linearity to the whole $P$.
		Here the sum is over $C_1\sqcup C_2=[m], D_1\sqcup D_2=[n],|C_1|=m_1, |C_2|=m_2, |D_1|=n_1, |D_2|=n_2$ and $\sigma_{23}$ is the flip excahning the two middle factors.
		Recall that $\kappa_1,\kappa_2,\lambda_1,\lambda_2$ depend on $C_1,C_2,D_1,D_2$.
	\end{definition}
	\begin{remark}
		Since the above definition of the composition $\circ$ doesn't depend on the choice of maps $\kappa_1,\kappa_2,\lambda_1,\lambda_2$ it might be sometimes useful to make a convenient choice of these. Without loss of generality we can assume $A \subset [m_2 + |A|]$ and $B \subset [n_1 + |B|]$ and hence relabel them as $M$ and $N$ respectively, just to follow our conventions from remark \ref{poznamka s unshuffly}. Let $\kappa_1$, $\lambda_2$ be increasing as well as $\lambda_1$ when restricted to $[n_1 + |N|] -N$ and $\kappa_2$ when restricted to $[m_2 + |M|] -M$. Then the $(m=m_1+m_2, n=n_1+n_2,\chi(\chi_1,\chi_2,M,N,\xi))$ component of the above composition $g\comp h$ can be rewritten as
		\begin{equation}\label{kompozice1}
		\sum(\oP(\rho,\sigma)\tp\oEnd{V}(\rho,\sigma))\left((\stackrel{\xi}{\bar{\ooo{N}{M}}})_{\oP} \tp (\stackrel{\xi}{\bar{\ooo{N}{M}}})_{\oEnd{V}}\right)   \sigma_{23}(g\tp h),
		\end{equation}
		with the sum running over all  $(m_1, m_2)$-shuffles $\rho$ and $(n_2, n_1)$-shuffles $\sigma$.
	\end{remark}
	\begin{theorem} \label{THMFeynBaran}
		Algebra over the cobar complex $C{\oP}$ on a dg vector space $V$ is equivalently given by a degree $1$ element
		$$L\in P:=\prod_{\substack{m,n\\ \chi > 0}}  \,^{\Sigma_m}\left(\oP([m],[n],\chi)\tp\oEnd{V}([m],[n],\chi)\right)^{\Sigma_n}$$
		satisfying the master equation
		\begin{equation}\label{MasterEquation}
		d(L)+L\comp L=0.
		\end{equation}
	\end{theorem}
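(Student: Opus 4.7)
The plan is to exhibit a natural bijection between the data of Lemma~\ref{LEMMAAlgOverFeynTrans1} and degree~$1$ elements $L\in P$, and then to translate the two conditions on $\bar\alpha$ (equivariance and the differential identity) into, respectively, the $\Sigma$-invariance built into $P$ and the master equation $d(L)+L\comp L=0$.

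First, I would set up the correspondence componentwise. Because each $\bar\oP(m,n,\chi)$ is assumed finite dimensional, there is a canonical isomorphism
\[
\op{Hom}_\K\!\bigl(\bar\oP(m,n,\chi)^{\#},\bar{\mathcal E}_V(m,n,\chi)\bigr)\;\cong\;\bar\oP(m,n,\chi)\tp \bar{\mathcal E}_V(m,n,\chi),
\]
and the diagonal $\Sigma_m\times\Sigma_n$-invariants on the right-hand side correspond exactly to $\Sigma$-equivariant maps on the left. Hence a degree $1$ equivariant family $\{\bar\alpha(m,n,\chi)\}$ matches a degree $1$ element $L=\sum_{m,n,\chi}L_{m,n,\chi}$ of $P$. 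This step uses only Example~\ref{unordIso} and standard linear algebra; the subtlety is only to keep track of the suspension/degree conventions so that "degree~$1$" is preserved under the identification.

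Next, I would unravel $d(L)+L\comp L=0$ component by component and compare with~(\ref{EQAlfOverFTTwo}). The definition~(\ref{diferencial v baranikovi}) of the differential on $P$, transported through the above isomorphism, yields precisely the combination $d\circ\bar\alpha-\bar\alpha\circ d_{\bar\oP^{\#}}$ (the sign in (\ref{diferencial v baranikovi}) is exactly what is needed because dualising sends $d_{\oP}$ to $-d_{\oP^{\#}}$ up to the Koszul sign dictated by the degree-$1$ shift). For the composition $L\comp L$ I would use the convenient representatives fixed in the remark following the definition of $\comp$: with $M=[|M|]$, $N=n_1+[|N|]$ and the bijections $\kappa_i,\lambda_i$ chosen to be the canonical order-preserving ones, the summation in~(\ref{kompozice1}) runs over $(m_1,m_2)$-shuffles and $(n_2,n_1)$-unshuffles, which is the skeletal repackaging of the sum over decompositions $C_1\sqcup C_2=[m]$, $D_1\sqcup D_2=[n]$ appearing in~(\ref{EQAlfOverFTTwo}). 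The $\sigma_{23}$ flip accounts for the fact that on the endomorphism side we apply the two copies of $\bar\alpha$ before composing, while on the properad side we must first dualise the composition via $(\stackrel{\eta}{\ooo{B}{A}})^{\#}_{\bar\oP}$.

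The main technical obstacle, and the one where I would spend most of the care, is matching the combinatorial prefactors. The cobar differential carries the weight $\frac{1}{|A|!}$ in (\ref{EQAlfOverFTTwo}) precisely to offset the $|A|!$ choices of bijection $\eta:B\to A$ that label isomorphic grafting operations in the free properad; on the $P$-side this is absorbed because $L$ is by construction $\Sigma$-invariant, so evaluating $(\stackrel{\eta}{\ooo{B}{A}})_{\oP}\tp(\stackrel{\eta}{\ooo{B}{A}})_{\oEnd{V}}$ on the tensor factor $L\tp L$ gives the same result for all such $\eta$ and the $|A|!$-fold overcount is exactly what the factor $\frac{1}{|A|!}$ removes. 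Once this bookkeeping is verified (including the Koszul signs produced by commuting the degree-$1$ pieces of $L$ past each other, which match the symmetry of the graded wedge $\uparrow\! V_1\wedge\uparrow\! V_2$ decorating the single-edge graphs in the cobar complex), the identity $d(L)+L\comp L=0$ is seen to be equivalent, summand by summand, to (\ref{EQAlfOverFTTwo}), and the converse direction is obtained by reading the same calculation backwards. The equivariance condition of Lemma~\ref{LEMMAAlgOverFeynTrans1} is automatic from $L\in P$, so no further work is needed there.
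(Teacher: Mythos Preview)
Your proposal is correct and follows essentially the same route as the paper's own argument: the paper's sketch simply writes down the isomorphism $\op{Hom}_{\Sigma_C\times\Sigma_D}(\oP(C,D,\chi)^{\#},\oEnd{V}(C,D,\chi))\cong{}^{\Sigma_C}(\oP(C,D,\chi)\tp\oEnd{V}(C,D,\chi))^{\Sigma_D}$ via $\alpha\mapsto\sum_i p_i\tp\alpha(p_i^{\#})$ and asserts that under it~\eqref{EQAlfOverFTTwo} becomes the $(m,n,\chi)$-component of the master equation. Your write-up is in fact a fleshed-out version of that sketch, supplying the prefactor and sign bookkeeping that the paper leaves to the reader.
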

	
	\begin{proof}[Sketch of proof]
		Consider the iso
		\begin{align}
		\op{Hom}_{\Sigma_C \times \Sigma_D}(\oP(C,D,\chi)^{\#},\oEnd{V}(C,D,\chi)) & \xrightarrow{\cong} \,^{\Sigma_C}\left( \oP(C,D,\chi)\tp\oEnd{V}(C,D,\chi) \right)^{\Sigma_D} \label{EQBigXiIso} \\
		\alpha & \mapsto \sum_{i} p_i\tp\alpha(p_i^{\#}) \no
		\end{align}
		where $\{p_i\}$ is a $\K$-basis of $\oP(C,D,\chi)$ and $\{p_i^{\#}\}$ is its dual basis.
		Under this iso, \eqref{EQAlfOverFTTwo} becomes the $(\chi, m, n)$-component of the master equation of this theorem.
		
	\end{proof}

	By \eqref{EQBigXiIso}, any $L\in P$ can be written in the form 
	$$L=\sum_{n,m,\chi} L_{m,n,\chi} =\sum_{n,m,\chi}\sum_{i}p_i\tp\alpha(p_i^{\#})$$
	for some collection $\alpha$ of $\Sigma_m \times \Sigma_n$-equivariant  maps  of degree 1
	$$\alpha([m],[n],\chi):\oP([m],[n],\chi)^\#\to \mathcal{E}_V([m],[n],\chi).$$
	
	Let $p_i$ be basis of $\oP([m],[n],\chi)$ and $p^\#_i$ the dual one. Put $f_{p_i} := \bar\alpha(p^\#_i): V^{\otimes n} \to V^{\otimes m}$.
	Also, pick a homogeneous basis $\{a_i\}$ of $V$ and denote $f_{p_iI}^J$ the respective coordinates of $f_{p_i}$, where $I:=(i_1,\ldots,i_{n})$ and $J:=(j_1,\ldots,j_{m})$ are multi-indices in $[\dim V]^{\times n}$ and in $[\dim V]^{\times m}$, respectively.
	
	Hence, we have an iso $Y$:
	\begin{align}
	Y : \,^{\Sigma_m}\left(\oP([m],[n]\chi)\tp\oEnd{V}([m],[n],\chi)\right)^{\Sigma_n} &\cong \oP([m],[n],\chi)\,_{\Sigma_m}\tp_{\Sigma_n} V^{\tp m}\tp ((V^{\#})^{\tp n})\label{EQFirstIdentification} \\
	\sum_i p_i\tp\alpha(p_i^{\#}) &\mapsto \frac{1}{m!n!}\sum_{i,I,J}  f_{p_iI}^J (p_i\,_{\Sigma_m}\tp_{\Sigma_n}(a_J\otimes \phi^I ))\no  
	\end{align}
	and the RHS is the space of coinvariants with respect to the diagonal $\Sigma_n\times \Sigma_m$ action on the tensor product. Here, $\{\phi^i\}$ is the basis dual to $\{a_i\}$. The coefficient $\frac{1}{n!m!}$ is purely conventional.
	In particular, we have 
	\begin{equation}\label{Lcoinv}
	L=\sum_{n,m,\chi} \frac{1}{m!n!}\sum_{i,I,J}  f_{p_iI}^J (p_i\,_{\Sigma_m}\tp_{\Sigma_n}(a_J\otimes \phi^I ))
	\end{equation}
	
	The obvious inverse $Y^{-1}$ is
	$$Y^{-1}:p\,_{\Sigma_m}\tp_{\Sigma_n} (a_J\tp\phi^I)\mapsto \sum_{(\rho,\sigma) \in \Sigma_m \tp \Sigma_n} \oP(\rho,\sigma)(p)\tp\oEnd{V}(\rho,\sigma)(a_J\otimes \phi^I). $$
	Denote
	\begin{gather*}
	\tilde{P}(m,n,\chi) := \left(\oP([m],[n],\chi)\,_{\Sigma_m}\tp_{\Sigma_n}(V^{\tp m}\tp (V^{\#})^{\tp n}\right)\\
	\tilde{P} := \prod_{m,n,\chi}\tilde{P}(m,n,\chi).
	\end{gather*}
	Then $P\cong\tilde{P}$ and we can transfer the operations $d$ and $\circ$ from $P$ to $\tilde {P}$. 
	We start with the differential $\tilde{d}$ on $\tilde{P}$, which is obvious
	\begin{gather}
	\tilde{d} \left(p\,_{\Sigma_{m}}\otimes_{\Sigma_{n}} (a_J\tp\phi^I)\right)= d_\oP (p)\,_{\Sigma_{m}}\otimes_{\Sigma_{n}}(a_{J}\otimes \phi^{I})
	-(-1)^{|p|} p\,_{\Sigma_{m}}\otimes_{\Sigma_{n}} d_{\mathcal{E}_V}(a_J\tp\phi^I)
	\label{EQBVDiff}
	\end{gather}
	Concerning  the composition $\tilde{\circ}$, this is a bit more complicated, but also straightforward.
	
	\begin{equation*}
	\begin{tikzpicture} [baseline=-\the\dimexpr\fontdimen22\textfont2\relax]
	\matrix (m) [matrix of math nodes, row sep=3em, column sep=2.5em, text height=1.5ex, text depth=0.25ex]
	{ P\tp P & & \tilde{P} \tp \tilde{P} \\
		P & & \tilde{P} \\ };
	\path[->,font=\scriptsize] (m-1-1) edge node[below]{$\cong$} node[above]{$Y\tp Y$} (m-1-3);
	\path[->,font=\scriptsize] (m-1-1) edge node[left]{$\circ$} (m-2-1);
	\path[->,dashed,font=\scriptsize] (m-1-3) edge node[right]{$\tilde{\circ}$} (m-2-3);
	\path[->,font=\scriptsize] (m-2-1) edge node[above]{$Y$} node[below]{$\cong$} (m-2-3);
	\end{tikzpicture}
	\end{equation*}
	Chasing the above commutative diagram, we obtain:
	\begin{align}\label{skladani v baranikovi}
	&\left(p_1\,_{\Sigma_{m_1}}\otimes_{\Sigma_{n_1}} (a_{J_1}\otimes\phi^{I_1} )\right)\tilde{\circ}   \left(p_2\,_{\Sigma_{m_2}}\otimes_{\Sigma_{n_2}}(a_{J_2}\otimes \phi^{I_2})\right)=
	\\&= \sum_{M,N,\xi} \left((\stackrel{\xi}{\bar{\ooo{N}{M}}})_{\oP} (p_1 \tp p_2)\right)\,_{\Sigma_{m_1+m_2-|M|}}\otimes_{\Sigma_{n_1+n_2-|M|}} \left((\stackrel{\xi}{\bar{\ooo{N}{M}}})_{\oEnd{V}} (a_{J_1} \otimes \phi^{I_1} )\otimes (a_{J_2}\otimes \phi^{I_2})\right), \nonumber  
	\end{align}
	where the sum runs over all pairs of nonempty subsets $M\subset [m_2]$, $N\subset [n_1]$ with $|M|=|N|\leq\mbox{min}\{m_2,n_1\}$ and all isomorphisms $\xi$ between $N$ and $M$.
	\begin{remark} It is a straightforward check left to the reader to show that $(\tilde{P},\tilde{\circ}) $ forms a Lie-admissible algebra.
	\end{remark}

	\begin{quote}Let's assume that $\oP$ is a linear span of a properad in sets.\end{quote}
	That is, we assume that for each $([m],[n])\in\DCor$ there is a basis $\{p_i\}$ of $\oP([m],[n],\chi)$ which is preserved by the $\Sigma_m\times \Sigma_n$-action and the operations $\stackrel{\eta}{\ooo{B}{A}}$. This is obviously satisfied, e.g., for the closed Frobenius properad considered in this paper.
	With these choices, the coordinates $f_{p_iI}^J$ have the following simple
	invariance property
	$$f_{p_iI}^J = \pm f_{\oP(\rho,\sigma)(p_i) \sigma^{-1}(I)}^{\rho(J)},$$
	where $\pm$ is product of respective Koszul signs corresponding to $\rho(J)$ and $\sigma(I)$.
	
	We can decompose $\{p_i\}$ into $\Sigma_m\times \Sigma_n$-orbits indexed by $r$ and choose a representative $p_r$ for each $r$.
	Denote $O(p_r):=\Sigma_m \times\Sigma_m/\mathrm{Stab}(p_r)$ and also fix a section $\Sigma_m\times\Sigma_n/\mathrm{Stab}(p_r) \hookrightarrow \Sigma_m\times\Sigma_n$ of the natural projection, thus viewing $O(p_r)$ as a subset of $\Sigma_m\times\Sigma_n$.
	Hence the orbit of $p_r$ in $\oP([m],[n],\chi)$ is $\{\oP(\rho,\sigma)p_r \ |\ (\rho,\sigma)\in O(p_r)\}$ and it has $|O(p_r)|=\frac{n!m!}{|\mathrm{Stab}(p_r)|}$ elements.
	Hence, we can get an expression for elements of $\tilde{P}$ involving $p_r$'s only:

	\begin{gather}\label{EQAsInTale} 
	\frac{1}{m!n!} \sum_{i,I,J} \ f_{p_iI}^J (p_i\,_{\Sigma_m}\tp_{\Sigma_n}(a_J \otimes \phi^I)) = \sum_{r} \frac{1}{|\mathrm{Stab}(p_r)|} \sum_{I,J} \ f_{p_rI}^J (p_r\,_{\Sigma_m}\tp_{\Sigma_n}(a_J\otimes \phi^I))
	\end{gather}
	Thus the generating operator $L\in\tilde{P}$ can be expressed as
	\begin{gather} \label{EQGenFctionTilde}
	L=\sum_{m,n,\chi}\sum_{r,I,J} \frac{1}{|\mathrm{Stab}(p_r)|} f_{p_rI}^J \ (p_r\,_{\Sigma_m}\tp_{\Sigma_n}( a_J\otimes \phi^I)).
	\end{gather}
	
	Finally, it can be useful to have the following interpretation of the operation $\tilde{\circ}$. Here we shall assume the our corollas have always at least one input and one output, i.e. we assume $\mathcal{P}(C,D,\chi)$ to be nontrivial only if both $C$ and $D$ are non-empty and $m+n>2$,  for $G=0$. In this case, we  introduce, similarly to \cite{DJM}, positional derivations  
	\begin{equation}\label{pozicni derivace}
	\dfrac{\partial^{(k)}}{\partial a_j}\left(a_{i_1} \otimes \ldots \otimes a_{i_{m}}\right) =\left(-1\right)^{|a_j|(|a_{i_1}|+\ldots |a_{i_{k-1}}|)} \delta_j^{i_k} \left(a_{i_1} \otimes \ldots \otimes\widehat{a_{i_k}}\otimes \ldots \otimes a_{i_{m}}\right) \end{equation}
	and for sets $J=\lbrace j_1, \ldots j_{|N|}\rbrace$ and $K=\lbrace k_1, \ldots k_{|N|}\rbrace$
	$$\dfrac{\partial^{(K)}}{\partial a_J} = \dfrac{\partial^{(k_1)}}{\partial a_{j_1}} \ldots \dfrac{\partial^{(k_{|N|})}}{\partial a_{j_{|N|}}}.
	$$
	{Although the formula defining the positional derivative might seem obscure at the first sight, its usefulness will be obvious from the forthcoming formula (\ref{forthcoming}). The meaning of the positional derivative $\dfrac{\partial^{(k)}}{\partial a_j}$ is simple. Applied to a tensor product like $a_{i_1}\otimes \ldots \otimes a_{i_{m}}$ it is zero unless there is a tensor factor $a_j$ at the $k$-th position, in which case it cancels this factor and produces the relevant Koszul sign. We have introduced it because, in contrary to the left derivative familiar from the supersymmetry literature, here we do not have a rule how to commute the tensor factor $a_j$ to the left.}
	The ``inputs" from $(V^{\#})^{\tp n_1}$ in equation (\ref{skladani v baranikovi}) can then be interpreted as the partial derivations acting on the ``outputs" from $V^{\tp m_2}$, and hence we can interpret  elements of $\tilde P =\prod_{m,n,\chi} \tilde P(m,n,\chi)$ as differential operators acting on $\tilde P_+ := \prod_{k} \tilde P(k,0,\chi)$ as
	\begin{align}\label{forthcoming}
	& p_1\,_{\Sigma_{m_1}}\otimes_{\Sigma_{n_1}} (a_{J_1}\otimes \phi^{I_1}) : \\& p_2\,_{\Sigma_{m_2}}\otimes a_{J_2}\mapsto
	\pm \sum_{M,N,\xi}  \dfrac{\partial^{\xi(N)}}{\partial a_{N}} \left( a_M\right) (\stackrel{\xi}{\bar{\ooo{N}{M}}})_{\oP} (p_1 \tp p_2)\,_{\Sigma_{m_1+m_2-|M|}}\otimes_{\Sigma_{n_1-|M|}}
	a_{J_1} a_{J_2-M}, \nonumber 
	\end{align}
	where the sign $\pm$ is given as in (\ref{pozicni derivace}). Hence, in the master equation  $\tilde{d}\tilde{L}+\tilde{L}\tilde{\circ}\tilde{L}=0$ where $\tilde{L}=Y(L)$ with $Y$ being the iso (\ref{EQFirstIdentification}), the operation $\tilde \circ$ becomes the composition of differential operators. For this, recall that $\tilde{L}$ is of degree 1 so we can write $\tilde{L}\tilde{\circ}\tilde{L}=\frac{1}{2}[\tilde{L}\stackrel{\tilde{\circ}}{,}\tilde{L}]$ as the graded commutator.
	\section{\texorpdfstring{$IBL_\infty$}--algebras and their cousins} \label{SECTQC}
	
	\subsection{\texorpdfstring{$IBL_\infty$-algebras}{Loop homotopy algebras}} \label{SECTLoopHomAlg}
	\medskip
	The following statement appeared in \cite{Drummond}, cf. also \cite{Cieliebak ibl}.
	\begin{theorem}\label{THMMarklLoopViaOperads}
		The algebras over  the cobar complex  $\mathcal{CF}$ of the (closed) Frobenius properad are $IBL_\infty$-algebras.\footnote{ Note that our conventions are slightly different. Usually, for $IBL$-algebras one assumes that $n\geq 1$, $m\geq 1$, $G\geq 0$. We will comment on this later.} 
	\end{theorem}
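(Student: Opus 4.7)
The plan is to apply Theorem \ref{LEMMAAlgOverFeynTrans1} (or equivalently the Barannikov-type Theorem \ref{THMFeynBaran}) to the closed Frobenius properad $\mathcal{F}$ and verify that the resulting structure is precisely an $IBL_\infty$-algebra. The key simplification is that $\mathcal{F}([m],[n],\chi)=\K\cdot p_{m,n,\chi}$ is one-dimensional in every stable arity, with trivial differential and trivial $\Sigma$-action, so the data of an algebra collapses drastically.

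First, I would reparametrize by the genus: for each triple $(m,n,g)$ with $\chi=2g+m+n-2>0$, set $\mu_{g,m,n}:=\bar{\alpha}(m,n,\chi)(p_{m,n,\chi}^{\#})\in\mathrm{Hom}_\K(V^{\tp n},V^{\tp m})$, a degree $1$ map. Because $\mathcal{F}$ has trivial $\Sigma$-action, the equivariance relation of Lemma \ref{LEMMAAlgOverFeynTrans1} forces $\mu_{g,m,n}$ to be $\Sigma_m\times\Sigma_n$-equivariant with respect to the graded (Koszul) symmetric action on $V^{\tp m}$ and $V^{\tp n}$; this is exactly the (anti)symmetry requirement in the standard definition of an $IBL_\infty$-algebra.

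Next, I would unwind the master equation \eqref{EQAlfOverFTTwo}. Since composition in $\mathcal{F}$ is given by $p_{C_1,D_1\sqcup B,\chi_1}\tp p_{C_2\sqcup A,D_2,\chi_2}\mapsto p_{C,D,\chi_1+\chi_2}$, its dual $(\stackrel{\eta}{\ooo{B}{A}})^{\#}_{\mathcal{F}}$ sends $p_{m,n,\chi}^{\#}$ to the corresponding tensor product of dual generators, one for each valid splitting $(C_1,C_2,D_1,D_2,\chi_1,\chi_2,\eta)$. Substituting this into \eqref{EQAlfOverFTTwo} turns the left-hand side into $d\circ\mu_{g,m,n}+(-1)\mu_{g,m,n}\circ d$ (the differential on $\mathcal{E}_V$), and the right-hand side into a finite sum (over splittings with $\chi_1,\chi_2>0$ and $|A|=|B|\geq 1$) of symmetrized compositions
\[
\sum \frac{1}{|A|!}\,\sigma\cdot\bigl(\mu_{g_1,m_1,n_1+|A|}\stackrel{A}{\circ} \mu_{g_2,m_2+|A|,n_2}\bigr)\cdot\tau,
\]
with $g=g_1+g_2+|A|-1$, $m=m_1+m_2$, $n=n_1+n_2$, and $\sigma,\tau$ the appropriate (un)shuffles producing the $\Sigma_m\times\Sigma_n$-equivariant projection. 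These are exactly the quadratic relations defining an $IBL_\infty$-algebra (equivalently, the $(g,m,n)$-components of the BV master equation for the generating operator constructed in \eqref{Lcoinv}, interpreted as a differential operator on $\tilde P_+$ via \eqref{forthcoming}). The factor $1/|A|!$ coincides with the standard normalization that accounts for the labeling freedom on the $|A|$ glued edges.

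Finally, one checks that the data $\{\mu_{g,m,n}\}_{g\geq 0,\,2g+m+n-2>0}$ together with the induced equations coincides, term by term, with the standard definition of an $IBL_\infty$-algebra from \cite{Drummond,Cieliebak ibl}, and that the construction is reversible: given any such structure, reading the formulas backwards defines a collection $\bar\alpha$ satisfying the hypotheses of Lemma \ref{LEMMAAlgOverFeynTrans1} and hence an algebra over $C\mathcal{F}$. The main obstacle I expect is purely bookkeeping: matching Koszul signs under the identification $\mathcal{E}_V(m,n,\chi)\cong V^{\tp m}\tp (V^{\#})^{\tp n}$, tracking the suspension $\uparrow V$ of the cobar construction consistently across the iso \eqref{EQBigXiIso}, and reconciling the stability condition $\chi>0$ adopted here with the usual $IBL_\infty$ convention $m,n\geq 1$ (the mismatch concerns only the low-arity operations explicitly flagged in the footnote).
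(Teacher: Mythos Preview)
Your proposal is correct and follows essentially the same route as the paper: specialize Lemma~\ref{LEMMAAlgOverFeynTrans1}/Theorem~\ref{THMFeynBaran} to $\mathcal{F}$, use that each $\bar{\mathcal{F}}(m,n,\chi)$ is one-dimensional with trivial $\Sigma$-action to reduce the data to equivariant maps $\alpha_{m,n,\chi}=\bar\alpha(p_{m,n,\chi}^{\#})$, and then read off the $IBL_\infty$ relations from \eqref{EQAlfOverFTTwo}. The only cosmetic differences are that the paper keeps the parameter $\chi$ rather than $g$, fixes a standard choice of $N,M,\xi$ so that the $1/|A|!$ is absorbed into a single sum over $|N|$ and $(m_1,m_2)$-, $(n_2,n_1)$-shuffles, and then absorbs the differential $d$ as the unstable term $\alpha_{1,1,0}$ to write the whole identity as one vanishing sum, which it identifies with the explicit formula in \cite{Lada}.
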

	To prove it, recall the definition of the Frobenius properad $\mathcal{F}$ from example \ref{closed frob properad}. Each stable $\bar{\mathcal{F}}(m,n,\chi)$ is a trivial $\Sigma_m\times \Sigma_n$-bimodule spanned on one generator $p_{m,n,\chi}$. Hence,
	$$\bar{\mathcal{F}}(m,n,\chi) \,_{\Sigma_m}\tp_{\Sigma_n} (V^{\tp m}\tp V^{\#\tp n})\cong S^m(V) \tp S^n(V^\#)$$
	is the tensor product of the respective symmetric powers.
	It follows that formula (\ref{EQGenFctionTilde}) for the generating operator $L \in \tilde P$ is simplified to the form\footnote{Note, the invariance property $f^{\chi,J}_I= \pm  f^{\chi,\rho(J)}_{\sigma^{-1}(I)}$, $\pm$ being the product of Koszul signs corresponding to permutations $\rho$ and $\sigma$.}
	$$ L= \sum_{m,n,\chi} \sum_{I,J} \frac{1}{m!n!}f^{\chi,J}_I (a_J\tp\phi^I)$$
	where $f^{\chi,J}_I =(\bar{\alpha}(p^\#_{m,n,\chi}))^J_I$.
	
	
	Further, the algebra over cobar complex $\mathcal{CF}$ is given by (\ref{EQAlfOverFTTwo}). The differential $d_{\oP^{\#}}$ is for Frobenius properad trivial and the differential on $\oEnd{V}$ is given by (\ref{diferencial endomorphismove}). What is left is to exhibit the second term of the RHS of (\ref{EQAlfOverFTTwo}).
	\\
	As in formula (\ref{EQAlfOverFTTwo}), assume $A \subset [m_2 + |A|]$, $B \subset [n_1 + |B|]$, relabel them as $M$ and $N$ respectively, and assume $N=\{n_1+1,\ldots, n_1+|N|\}$, $M=\{1,\ldots, |N|\}$, $\xi(n_1 +k)=k$. The second term of the RHS in  (\ref{EQAlfOverFTTwo}) evaluated on the generator $p_{m,n,\chi}$ gives
	$$ 
	\sum_{\substack{m_1+m_2=m\\ n_1+ n_2=n}}\sum_{|N|=1}^{\frac{1}{2}(\chi-m-n)+2}\sum_{\chi_1} \sum_{\rho, \sigma}
	\rho\ (\stackrel{\xi}{\bar{\ooo{N}{M}}})_{\oEnd{V}} \left(\alpha_{m_1,n_1,\chi_1}\otimes \alpha_{m_2,n_2,\chi_2}\right)\sigma^{-1},
	$$
	where $\mathrm{max}\lbrace m_1+n_1+|N|-2, 1 \rbrace \leq \chi_1 \leq \mathrm{min}\lbrace\chi-m_2-n_2-|N|+2, \chi-1\rbrace $ by stability condition, $\alpha_{m_1,n_1,\chi_1}:= \bar{\alpha}(p^\#_{m_1,n_1,\chi_1})$, $\alpha_{m_2,n_2,\chi_2}:= \bar{\alpha}(p^\#_{m_2,n_2,\chi_2})$ and the last sum runs over shuffles $\rho,\sigma$ of type $(m_1,m_2)$ and $(n_2,n_1)$, respectively. If we denote the differential $d$ of dg vector space as $\alpha_{1,1,0}$ then together we get
	$$ 0=
	\sum_{\substack{m_1+m_2=m\\ n_1+ n_2=n}}\sum_{|N|=1}^{\frac{1}{2}(\chi-m-n)+2}\sum_{\chi_1=m_1+n_1+|N|-2}^{\chi-m_2-n_2-|N|+2} \sum_{\rho, \sigma}
	\rho\ (\stackrel{\xi}{\bar{\ooo{N}{M}}})_{\oEnd{V}} \left(\alpha_{m_1,n_1,\chi_1}\otimes \alpha_{m_2,n_2,\chi_2}\right)\sigma^{-1},
	$$
	which is, up to conventions and the stability condition, the formula in Theorem 4.3.6 of \cite{Lada}. This is  one of the equivalent descriptions of an $IBL_\infty$-algebra.
	In Baranikov's formalism this equation corresponds to the master equation, in Theorem \ref{THMFeynBaran}, for $L$ given above. 
	\begin{remark} \label{remark_hdo}
		In the above theorem we allow all stable values of $(m, n, \chi)$. In this case the corresponding  $IBL_\infty$-algebras are referred to as ``generalized" ones, cf. \cite{Cieliebak ibl}.
		If we assume only non-zero values of $m$ and $n$ and $m+n>2$, for $G=0$, there is another interpretation \cite{Cieliebak ibl}, \cite{Drummond} of  an $IBL_\infty$-algebra in terms of a ``homological differential operator'', cf. end of the previous section. Obviously, for the Frobenius properad, the respective discussion simplifies a lot.   The assignment $\phi^i \mapsto \partial_{a_i}$, $$\partial_{a_i}a_j - (-1)^{|a_i||a_j|} a_j\partial_{a_i} = \delta_i^j$$ turns the generating element $L\in \tilde P$ into a differential operator on $S(V)$\footnote{$P_+$ as introduced before is only a subspace of $S(V)$, but there is no problem in extending $L$ to the whole symmetric algebra.}, $$L= \sum_{m,n,\chi} \sum_{I.J} \frac{1}{m!n!}f^{\chi,J}_I a_J\frac{\partial}{\partial a^I}.$$ 
		
		Finally note, that the differential $d$ on $S(V)\tp S(V^\#)$ can be thought of as an element in $V\tp V^\#$ and hence as a first order differential operator on $S(V)$ with coefficients linear in $a_i$'s. Obviously, the derivatives $\partial_{a_i}$ have the meaning of the left derivatives $\partial^L_{a_i}$, well known from the supersymmetry literature. 
		
		All in all, on $S(V)$, we have a degree one differential operator  
		$d +L$, squaring to $0$, 
		$$(d +L)\circ (d+L)=(d +L)^2=0.$$ 
		
		The last remark: For a formal definition of an $IBL_\infty$-algebra, one can simply consider any degree one differential operator on $S(V)$ squaring to zero. This would accommodate $IBL_\infty$-algebras within the framework of BV formalism \cite{qocha}.
		
	\end{remark}
	\subsection{\texorpdfstring{$IBA_\infty$-algebras and open-closed $IB$-homotopy algebras}{Loop homotopy algebras1}} \label{SECTLoopHomAlg1}
	
	Here we consider the cases of the open and open-closed Frobenius properads.
	In view of the proof of the above Theorem \ref{THMMarklLoopViaOperads}, the following two theorems are straightforward. Their proofs are rather technical, but can be easily reconstructed by following the proofs of the corresponding theorems for modular operads \cite{DJM}.

	Let us consider $n$ inputs and $m$ outputs distributed over $b=p+q$ boundaries of a genus $g$ 2-dimensional oriented surface. More formally, we have a set of cycles $\{ \cc_1, \cc_2, \ldots ,\cc_{p}, \cd_1, \cd_2, \ldots, \cd_{q}\}$, of respective lengths $(k_1, k_2,\ldots, k_{p}, l_1, l_2, \ldots l_{q})$. That is, $\cc_1=\cyc{j_{1}\cdots j_{k_1}}$,  $\cc_2=\cyc{j_{k_1+1}\cdots j_{k_1+k_2}}, \ldots, \cc_{p}=\cyc{j_{k_1+\ldots k_{{p}-1}+1}\cdots j_{m}}$ being  cycles in $[m]$ and similarly $\cd_1=\cyc{i_{1}\cdots i_{l_1}}$,  $\cd_2=\cyc{i_{l_1+1}\cdots i_{l_1+l_2}}, \ldots \cd_{q}=\cyc{i_{l_1+\ldots l_{q-1}+1}\cdots i_{n}}$, being  cycles in $[n]$. Now, let each of the indices $i_1,\ldots i_n$ and $j_1,\ldots j_m$ take values in the set $[\mbox{dim}V]$ and group them into respective multi-indices 
	$$I := i_{1}\cdots i_{l_1} | i_{l_1+1}\cdots i_{l_1+l_2}|\cdots|i_{l_1+\ldots l_{q-1}+1}\cdots + i_{n}$$ 
	$$J:=j_{1}\cdots j_{k_1} | j_{k_1+1}\cdots j_{k_1+k_2}|\cdots|j_{k_1+\ldots k_{p-1}+1}\cdots +j_{m}.$$
	
	We will use the following, hopefully self-explanatory, notation for these indices: 
	$I=I_1|I_2|\cdots|I_{q}$ and similarly for $J$. Concerning the coinvariants
	(\ref{EQFirstIdentification}), consider elements in the tensor algebra $T(V)\otimes T(V^\#)$ of the form $a_{J_1|J_2|\cdots |J_{p}}\tp \phi^{I_1|I_2|\cdots |I_{q}}$, where we identify, up to the corresponding  Kozsul sign,  tensors which differ by cyclic permutations of outputs/inputs within the boundaries, i.e. within the individual multi-indices $I_i$ and $J_j$ and also under permutations of output/input boundaries, i.e. independent permutations of multi/indices
	$(I_i)$ and $(J_i)$. We will denote the subspace of $T(V)\otimes T(V^\#)$ spanned by these elements as $T^{\mbox{cyc}}(V)\otimes T^{\mbox{cyc}}(V^\#)$.
	Further, consider coefficients $f^{(g,p,q){J_1|J_2|\cdots|J_{p}}}_{I_1|I_2|\cdots| I_{q}}$ possessing the corresponding invariance, up the Koszul sing, under  cyclic permutations of outputs/inputs within the boundaries and also under independent permutations of output/input boundaries. 
	
	Put,
	\begin{equation}\label{IBA}
	L = \sum_{p,q,g} \sum_{\substack{I_{1}|I_2|\cdots|I_{q}\\J_1|J_2|\cdots|J_{p}}} \frac{1}{p!q!\prod'_s l_{s}k_{s}} f^{(g,p,q){J_1|J_2|\cdots|J_{p}}}_{I_1|I_2|\cdots| I_{q}} a_{J_1|J_2|\cdots |J_{p}}\tp \phi^{I_1|I_2|\cdots |I_{q}},
	\end{equation}
	where $\prod'$ is the product of nonzero $l_s$'s and $k_s$'s and where $I_s$ runs over all elements of $[\mbox{dim}V]^{\times l_s}$ and similarly $J_s$ runs over all elements of $[\mbox{dim}V]^{\times k_s}$.
	Also, we included the differential into $L$ as an element corresponding to the cylinder with one input and one output.

	\begin{theorem} \label{THMMarklLoopViaOperads1}
		Algebra over  the cobar complex $\mathcal{COF}$ is described by a degree one element $L$ (\ref{IBA}) of $T^{\mbox{cyc}}(V)\otimes T^{\mbox{cyc}}(V^\#)$ such that $L\circ L =0.$
	\end{theorem}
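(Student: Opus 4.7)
The plan is to mirror the argument for Theorem~\ref{THMMarklLoopViaOperads} using Barannikov's master equation framework from Theorem~\ref{THMFeynBaran}. The open Frobenius properad $\mathcal{OF}$ is a linearization of a properad in sets, so formula~\eqref{EQGenFctionTilde} applies directly, and the only new ingredient compared to the closed case is the more intricate combinatorics of the $\Sigma_m\times\Sigma_n$-orbits and stabilizers of its basis elements.

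First I would classify the $\Sigma_m\times\Sigma_n$-orbits of the basis of $\bar{\mathcal{OF}}(m,n,\chi)$. A basis element $\{\cc_1,\ldots,\cc_p,\cd_1,\ldots,\cd_q\}^g$ is, after forgetting the labels, determined by the genus $g$ together with the multisets of cycle lengths $(k_1,\ldots,k_p)$ on the output side and $(l_1,\ldots,l_q)$ on the input side. The stabilizer of a chosen representative $p_r$ is generated by (i) permutations of output (resp.\ input) boundaries carrying cycles of equal length and (ii) cyclic rotations within each cycle. A short counting argument gives $|\mathrm{Stab}(p_r)|=p!\,q!\prod'_s k_s l_s$, where the primed product skips factors of length zero; this is exactly the denominator appearing in~\eqref{IBA}.

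Next I would identify the coinvariant space $\bar{\mathcal{OF}}(m,n,\chi)\,_{\Sigma_m}\tp_{\Sigma_n}(V^{\tp m}\tp V^{\#\tp n})$ with the relevant subspace of $T^{\mathrm{cyc}}(V)\otimes T^{\mathrm{cyc}}(V^\#)$: the tensor $a_{J_1|\cdots|J_p}\otimes \phi^{I_1|\cdots|I_q}$ corresponds to the orbit of $p_r$ with $J_s$ decorating the cycle $\cc_s$ and $I_s$ decorating $\cd_s$, modulo precisely the relations of cyclic rotation inside each boundary and permutation of same-type boundaries, which are the defining relations of $T^{\mathrm{cyc}}$. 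Substituting this identification into~\eqref{EQGenFctionTilde} yields formula~\eqref{IBA} for the generating element $L$. The composition~\eqref{skladani v baranikovi} then transfers through the isomorphism $Y$ to an operation on $T^{\mathrm{cyc}}(V)\otimes T^{\mathrm{cyc}}(V^\#)$ whose $\mathcal{OF}$ factor executes the two-step gluing-and-splitting of boundary cycles from Figure~3, while the $\mathcal{E}_V$ factor contracts the matched input/output slots via positional derivations in the spirit of~\eqref{forthcoming}. The base differential on $V$ is absorbed into $L$ as the ``cylinder'' term with $g=0$, $p=q=1$, $k_1=l_1=1$, so that Barannikov's master equation $d(L)+L\circ L=0$ collapses to the compact equation $L\circ L=0$.

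The main obstacle I anticipate is the bookkeeping forced by the non-additivity of the Euler characteristic in the open case: under a single composition the genus can jump by the number of \emph{distinct pairs of boundaries} that end up glued together, rather than by the number of segments paired by $\eta$, as noted in the remark following Figure~3. Verifying that each summand of $L\circ L$ lands in the correct $(m,n,\chi)$-component therefore requires unwinding the gluing/splitting algorithm and reconciling it with the $M,N,\xi$ summation in~\eqref{skladani v baranikovi}. Once this is in place, the cyclic and boundary-permutation invariance of the coefficients $f^{(g,p,q)J_1|\cdots|J_p}_{I_1|\cdots|I_q}$ follows automatically from the $\Sigma_m\times\Sigma_n$-equivariance of $\bar\alpha$ applied to the stabilizer elements of $p_r$, and signs are tracked by the Koszul rules already built into Definition~\ref{Dnes_prednaska_v_Myluzach}.
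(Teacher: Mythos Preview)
Your approach is exactly what the paper has in mind; in fact the paper does not supply a proof at all, merely declaring the result ``straightforward'' by the methods of Theorem~\ref{THMMarklLoopViaOperads} and pointing to~\cite{DJM} for the analogous modular-operad computation. Specializing Theorem~\ref{THMFeynBaran} via~\eqref{EQGenFctionTilde}, identifying the coinvariants with $T^{\mathrm{cyc}}(V)\otimes T^{\mathrm{cyc}}(V^\#)$, and absorbing the differential as the cylinder term is precisely the intended route.

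There is, however, a slip in your stabilizer count. The subgroup of $\Sigma_m\times\Sigma_n$ fixing a representative with output cycle lengths $(k_1,\ldots,k_p)$ and input cycle lengths $(l_1,\ldots,l_q)$ has order $\prod_k m_k!\,k^{m_k}\cdot\prod_l m'_l!\,l^{m'_l}$, where $m_k$ (resp.\ $m'_l$) is the multiplicity of the length $k$ (resp.\ $l$); this equals $p!\,q!\prod'_s k_s l_s$ only when all output cycles share a single length and likewise for the inputs. The denominator $p!\,q!\prod'_s k_s l_s$ in~\eqref{IBA} arises instead because that sum runs over \emph{ordered} tuples of cycle lengths (and over all cyclic rotations of each $I_s,J_s$): the overcounting of a given multiset of lengths by its orderings is exactly $p!\,q!/(\prod_k m_k!\prod_l m'_l!)$, which supplies the missing factor. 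Equivalently, derive~\eqref{IBA} directly from~\eqref{Lcoinv} rather than from~\eqref{EQGenFctionTilde}: summing over the full basis $\{p_i\}$ and all $I,J$ with the prefactor $1/(m!\,n!)$ and then passing to coinvariants yields the stated formula without ever isolating a single orbit representative. With this correction the rest of your outline goes through unchanged.
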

	\begin{remark}\label{remark_hdo1}
		A remark completely analogous to the above Remark \ref{remark_hdo} can be made. In particular, we can think of $\phi_i$ as being represented by a ``left'' derivative $\partial^L_{a_i}$. This is possible because in any monomial of the form $a_{J_1|J_2|\cdots |J_{p}}$ one can always get any of the variables $a_{j_k}$ to the left by a permutation of boundaries and a cyclic permutation within the respective boundary. Hence, if we consider for {\it{any}} collection of multi-indicies ${J_1|J_2|\cdots |J_{p}}$ the tensor product $V^{\otimes J_1}\otimes \ldots \otimes V^{\otimes J_p}$ modulo the respective symmetry relations, on the direct product over all such multi-indices, we have again a homological differential operator $L$.
	\end{remark}

	Finally, let us concern the cobar complex $\mathcal{COCF}$ of the two-colored properad  $\mathcal{OCF}$.
	To describe coinvarinats, consider elements of  $T^{\mbox{cyc}}(V_{\underline{o}})\otimes S(V_{\underline{c}}) \otimes T^{\mbox{cyc}}(V^\#_{\underline{o}})\otimes S(V^\#_{\underline{c}})$ of the form $a_{J_1|J_2|\cdots |J_{p};J}\tp \phi^{I_1|I_2|\cdots |I_{q};I}$
	where $a_{J_1|J_2|\cdots |J_{p};J}:= a_{J_1|J_2|\cdots |J_{p}}\tp a_J$ and $\phi^{I_1|I_2|\cdots |I_{ q};I}:=\phi^{I_1|I_2|\cdots |I_{q}}\tp \phi^I$.  Correspondingly, consider the coefficients $f^{(g,p,q){J_1|J_2|\cdots|J_{p}};J}_{I_1|I_2|\cdots| I_{q};I}$ with the obvious symmetry properties. 
	Put,
	\begin{equation}
	\label{OC}
	L = \sum_{m,n, p,q,g} \sum_{\substack{I_{1}|I_2|\cdots|I_{q};I\\J_1|J_2|\cdots|J_{p};J}} \frac{1}{m!n! p!q!\prod'_s l_{s}k_{s}} f^{(g,p,q){J_1|J_2|\cdots|J_{p};J}}_{I_1|I_2|\cdots| I_{q};I} a_{J_1|J_2|\cdots |J_{p};J}\tp \phi^{I_1|I_2|\cdots |I_{q};I},
	\end{equation}
	where, as before, $\prod'$ is the product of nonzero $l_s$'s and $k_s$'s and where $I_s$ runs over all elements of $[\mbox{dim}V_{\underline o}]^{\times l_s}$ and  $J_s$ runs over all elements of $[\mbox{dim}V_{\underline o}]^{\times k_s}$. The closed multi-index $I$ runs over all elements of $[\mbox{dim}V_{\underline c}]^{\times m}$, similarly $J$ runs over all elements of $[\mbox{dim}V_{\underline c}]^{\times n}$.
	Also, we included the open and closed differentials into $L$ as elements corresponding to the cylinder with one input and one output and to sphere with one input and one output, respectively.
	
	\begin{theorem} 
		Algebra over  the cobar complex $\mathcal{COCF}$ is described by degree one element $L$ (\ref{OC}) of  $T^{\mbox{cyc}}(V_{\underline{o}})\otimes S(V_{\underline{c}}) \otimes T^{\mbox{cyc}}(V^\#_{\underline{o}})\otimes S(V^\#_{\underline{c}})$, such that $L\circ L =0.$
	\end{theorem}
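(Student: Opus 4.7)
The plan is to proceed in close analogy with the proofs of Theorems \ref{THMMarklLoopViaOperads} and \ref{THMMarklLoopViaOperads1}, but this time applying the $2$-colored version of Barannikov's Theorem \ref{THMFeynBaran} to the cobar complex $\mathcal{COCF}$. First I would invoke the $2$-colored analogue of Theorem \ref{THMFeynBaran}, whose formulation is announced in the preceding Remark: an algebra over $\mathcal{COCF}$ on the dg vector space $V_{\underline{o}}\oplus V_{\underline{c}}$ is equivalent to a degree $1$ element
\[
L\in \prod_{\substack{(O_1,O_2),(C_1,C_2)\\ \chi>0}} \,^{\Sigma_{O_1}\times\Sigma_{C_1}}\!\!\left(\mathcal{OCF}((O_1,O_2),(C_1,C_2),\chi)\otimes \oEnd{V_{\underline{o}}\oplus V_{\underline{c}}}\right)^{\Sigma_{O_2}\times \Sigma_{C_2}}
\]
satisfying $d(L)+L\circ L=0$, and then transport this via the isomorphism $Y$ of (\ref{EQFirstIdentification}) to the corresponding space $\tilde{P}$ of coinvariants.

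Next I would identify $\tilde{P}$ explicitly by examining how the $\Sigma$-action of $\mathcal{OCF}$ combines the open and closed sectors. On the closed generators (inputs and outputs in the interior), the action permutes them freely, so taking coinvariants turns the corresponding tensor factors into $S(V_{\underline{c}})\otimes S(V_{\underline{c}}^{\#})$, exactly as in the Frobenius case of Theorem \ref{THMMarklLoopViaOperads}. On the open generators (inputs and outputs on boundaries), the action permutes whole boundaries and cyclically permutes segments within each boundary, so taking coinvariants produces the cyclic tensor spaces $T^{\mathrm{cyc}}(V_{\underline{o}})\otimes T^{\mathrm{cyc}}(V_{\underline{o}}^{\#})$, exactly as in Theorem \ref{THMMarklLoopViaOperads1}. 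Since $\mathcal{OCF}((O_1,O_2),(C_1,C_2),\chi)$ is a $\K$-linearization of a set operad with basis elements indexed by a genus $g$ together with decompositions of $O_1, O_2$ into boundary cycles, one may apply the simplification (\ref{EQAsInTale}): the sum over the basis collapses to one orbit representative for each combinatorial type $(g,p,q)$ with factor $1/|\mathrm{Stab}|$. The stabilizer of a standard basis element is $\Sigma_p\times \Sigma_q\times \Sigma_m\times \Sigma_n$ together with cyclic rotations of each boundary, giving $|\mathrm{Stab}|=m!\,n!\,p!\,q!\prod'_s l_s k_s$, which reproduces precisely the coefficient in formula (\ref{OC}).

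Finally, I would verify that the master equation $d(L)+L\circ L=0$ unpacks into the defining equations of the homotopy algebra by dualising the structure maps of $\mathcal{OCF}$ along the lines of (\ref{EQAlfOverFTTwo}). Since the differential $d_{\mathcal{OCF}^{\#}}$ vanishes on the combinatorial generators, the $\tilde{d}$ term of (\ref{EQBVDiff}) reduces to the endomorphism differential, and the inclusion of the open and closed cylinders as the $(1,1)$-parts of $L$ incorporates $d_{\underline{o}}$ and $d_{\underline{c}}$ as explained below (\ref{OC}). The quadratic term $\tilde{L}\tilde{\circ}\tilde{L}$ expands via (\ref{skladani v baranikovi}), now summed over subsets $M,N$ in both the open and closed colors independently and over bijections $(\xi_o,\xi_c)$, reproducing the two types of gluing described in the definition of $\mathcal{OCF}$. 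The main technical obstacle is a purely bookkeeping one: keeping track of the nontrivial combinatorics of the open gluing step, in which gluing along open segments first produces mixed boundary cycles and then splits them into separate input/output cycles, and showing that this rewriting is exactly dual to the operation $(\bar{\circ}^{\,\xi_o}_{N_o,M_o})_{\mathcal{OCF}}$ as encoded by the algorithm described after Figure \ref{} in the text. Given the remark following Theorem \ref{THMMarklLoopViaOperads1}, which already handles the purely open case, and Theorem \ref{THMMarklLoopViaOperads}, which handles the purely closed case, the open-closed statement follows by running the two bookkeeping procedures in parallel, with the operator interpretation of Remarks \ref{remark_hdo} and \ref{remark_hdo1} giving the compact form $(d+L)^2=0$.
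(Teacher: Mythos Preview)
Your proposal is correct and follows precisely the route the paper itself indicates: the paper does not give an independent proof of this theorem but states, just before Theorem~\ref{THMMarklLoopViaOperads1}, that both it and the open-closed theorem are ``straightforward'' in view of the proof of Theorem~\ref{THMMarklLoopViaOperads} and ``can be easily reconstructed by following the proofs of the corresponding theorems for modular operads'' in \cite{DJM}. Your outline is exactly this reconstruction---applying the $2$-colored Barannikov theorem, identifying the coinvariants as $T^{\mathrm{cyc}}(V_{\underline o})\otimes S(V_{\underline c})\otimes T^{\mathrm{cyc}}(V_{\underline o}^{\#})\otimes S(V_{\underline c}^{\#})$ by combining the closed and open computations, and reading off the coefficient in~(\ref{OC}) from the orbit/stabilizer count---so there is nothing to compare; you have simply supplied the details the paper omits.
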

	
	Finally, remarks \ref{remark_hdo} and \ref{remark_hdo1} apply correspondingly.

	\newpage
	\vspace{2cm}\noindent {\bf Acknowledgements}:
	The research of M.D. and B.J. was supported by grant GA\v CR 18-07776S. The research of L.P. was supported by the grant SVV-2017-260456 and GAUK 544218.  B.J. wants to thank MPIM in Bonn for hospitality. We also thank to the anonymous referee for very useful comments.
	

\end{document}